\documentclass{llncs}
\usepackage[dvips]{graphicx}
\usepackage{amssymb}
\usepackage{amsmath}
\usepackage{dsfont}
\usepackage{yfonts}
\usepackage[T1]{fontenc}
\usepackage{algorithmic}
\usepackage{algorithm}
\usepackage{multirow}
\usepackage{psfrag}
\newcommand{\F}{\mathbb{F}}
\newcommand{\Q}{\mathbb{Q}}
\newcommand{\N}{\mathbb{N}}
\newcommand{\Z}{\mathbb{Z}}

\newcommand{\C}{\mathbb{C}}

\newcommand{\End}{\textrm{End}}

\begin{document}

\title{Pairing-based algorithms for jacobians of genus 2 curves with maximal endomorphism ring}
\author{Sorina Ionica}
\institute{
Ecole Normale Sup\'erieure\footnote{This work was carried during the
  author's stay at the Ecole Polytechnique, team TANC and
  at LORIA, Nancy, team CARAMEL.}\\
45 Rue d'Ulm, Paris, 75005, France\\
\email{sorina.ionica@m4x.org}
}
\maketitle

\begin{abstract}
Using Galois cohomology, Schmoyer characterizes cryptographic non-trivial self-pairings of the $\ell$-Tate pairing in terms of the action of the Frobenius on the $\ell$-torsion of the Jacobian of a genus 2 curve. We apply similar techniques to study the non-degeneracy of the $\ell$-Tate pairing restrained to subgroups of the $\ell$-torsion which are maximal isotropic with respect to the Weil pairing. First, we deduce a criterion to verify whether the jacobian of a genus 2 curve has maximal endomorphism ring. Secondly, we derive a method to construct horizontal $(\ell,\ell)$-isogenies starting from a jacobian with maximal endomorphism ring.
\end{abstract}
\section{Introduction}

A central problem in elliptic and hyperelliptic curve cryptography is that of constructing an elliptic curve or an abelian surface
having a given number of points on their Jacobian. The solution to this problem relies on the computation of the Hilbert class polynomial
for a quadratic imaginary field in the genus one case. The analogous genus 2 case needs the Igusa class polynomials for quartic CM fields.
There are three different methods to compute these polynomials: an analytic algorithm~\cite{Weng}, a $p$-adic algorithm~\cite{GauHou} and a Chinese Remainder Theorem-based algorithm~\cite{EisLau}. The last one relies heavily on an algorithm for determining endomorphism rings of the jacobians of genus 2 curves over prime fields.
Eisentr\"ager and Lauter~\cite{EisLau} gave the first algorithm for computing endomorphism rings of Jacobians of genus 2 curves over finite fields. The algorithm takes as input a jacobian $J$ over a finite field and a primitive quartic CM field $K$, i.e. a purely imaginary quadratic extension field of a real quadratic field with no proper imaginary quadratic fields. The real quadratic subfield $K_0$ has class number $1$. The main idea is to compute a set of generators of an order $\mathcal{O}$ in the CM field and then to test whether these generators are endomorphisms of $J$, in order to decide whether the order $\mathcal{O}$ is the endomorphism ring $\textrm{End}(J)$ or not. In view of application to the CRT method for Igusa class polynomial computation, Freeman and Lauter bring a series of improvements to this algorithm, in the particular case where we need to decide whether $\textrm{End}(J)$ is the maximal order or not.
Note that the Eisentr\"ager-Lauter CRT method for class polynomial
computation searches for curves defined over some prime field $\F_p$ and
belonging to a certain isogeny class. Once such a curve is found, the
algorithm keeps the curve only if it has maximal endomorphism ring. This
search is rather expensive and ends only when all curves having maximal
endomorphism ring were found. Recent research in the
area~\cite{BelBro,Sutherland1,BroGru} has shown that we can significantly
reduce the time of this search by using \textit{horizontal isogenies},
i.e. isogenies between jacobians having the same endomorphism ring. Indeed,
once a Jacobian with maximal endomorphism ring is found, many others can be
generated from it by computing horizontal isogenies. In this paper, we
propose a new method for checking if the endomorphism ring is locally
maximal at $\ell$, for $\ell>2$ prime, and a method to compute kernels of
horizontal $(\ell,\ell)$-isogenies. Our methods rely on the computation of the Tate pairing. 

Let $H$ be a genus 2 smooth irreducible curve defined over a finite field $\F_q$, $J$ its jacobian and suppose that $J[\ell^n]\subseteq J(\F_q)$ and that $J[\ell^{n+1}]\nsubseteq J(\F_q)$, with $\ell$ different from $p$ and $n\geq 1$. We denote by $\mathcal{W}$ the set of rank 2 subgroups in
$J[\ell^n]$, which are isotropic with respect to the $\ell^n$-Weil pairing. We define $k_{\ell}$ to be
\begin{eqnarray*}
k_{\ell}=\max_{G\in \mathcal{W}}\{k|\exists P,Q\in G~\textrm{and}~T_{\ell^n}(P,Q)\in \mu_{\ell^k}\backslash \mu_{\ell^{k-1}}\}.
\end{eqnarray*}
The jacobian $J$ is ordinary, hence it has complex multiplication by an
order in a quartic CM field $K$. We assume that $K=\Q(\eta)$, with $\eta=i\sqrt{a+b\sqrt{d}}$ if $d\equiv 2,3\mod 4$ or $\eta=i\sqrt{a+b\left ( \frac{-1+\sqrt{d}}{2} \right )}$ if $d\equiv 1\mod 4$. We consider the decomposition of the Frobenius endomorphism $\pi$ over a basis of the ring of integers of $K$ : $\pi=a_1+a_2\frac{-1+\sqrt{d}}{2}+(a_3+a_4\frac{-1+\sqrt{d}}{2})\eta$, if $d\equiv 1 \bmod 4$ and $\pi=a_1+a_2\sqrt{d}+(a_3+a_4\sqrt{d})\eta$, if $d\equiv 2,3 \bmod 4$. We assume that the coefficients verify the following condition 
\begin{eqnarray}\label{condition}
\max (v_{\ell}(\frac{a_3-a_4}{\ell}),v_{\ell}(\frac{a_3-\ell a_4}{\ell^2})) <\min (v_{\ell}(a_3), v_{\ell}(a_4)).
\end{eqnarray}
We show that if condition~\eqref{condition} is satisfied, the computation
of $k_{\ell}$ suffices to check whether the endomorphism ring is locally
maximal at $\ell$, in many cases. Moreover, our method to distinguish
kernels of horizontal $(\ell,\ell)$-isogenies from other $(\ell,\ell)$-isogenies is also related to $k_{\ell}$. Given $G$ an element of $\mathcal{W}$, we say that the Tate pairing is $k_{\ell}$-non-degenerate (or simply non-degenerate) on $G\times G$ if the restriction map
\begin{eqnarray*}
T_{\ell^n}:G\times G\rightarrow \mu_{\ell^{k_{\ell}}}
\end{eqnarray*}
is surjective. Otherwise, we say that the Tate pairing is $k_{\ell}$-degenerate (or simply degenerate) on $G\times G$. Our main result is the following theorem.
\begin{theorem}\label{horizontal}
Let $H$ be a genus 2 smooth irreducible curve defined over a finite field $\F_q$ and $\ell>2$ a prime number. Let $J$ be the jacobian of $H$, whose endomorphism ring is a locally maximal order at $\ell$ of a CM-field $K$. Assume that the real quadratic subfield $K_0$ has class number 1. Suppose that the Frobenius endomorphism $\pi$ is such that $\pi-1$ is exactly divisible by $\ell^n$, $n\in \Z$ and that $k_{\ell}>0$. Let $G$ be a subgroup of rank 2 in $J[\ell]$ which is isotropic with respect to the Weil pairing. Let $\bar{G}$ be a rank 2 subgroup in $J[\ell^n]$ isotropic with respect to the $\ell^n$-Weil pairing and such that $\ell^{n-1}\bar{G}=G$. Then the following hold
\begin{enumerate}
\item If the isogeny of kernel $G$ is horizontal, then the Tate pairing is $k_{\ell}$-degenerate over $\bar{G}\times \bar{G}$.
\item If the condition~\eqref{condition} is satisfied and the Tate pairing is $k_{\ell}$-degenerate over $\bar{G}\times \bar{G}$, then the isogeny is horizontal. 
\end{enumerate}
\end{theorem}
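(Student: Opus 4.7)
The plan is to combine the cohomological description of the Tate pairing used by Schmoyer with the module structure of $J[\ell^\infty]$ over $\mathcal{O}_K\otimes\Z_\ell$. For $P,Q\in\bar{G}$, choosing a lift $R$ of $Q$ in $J[\ell^{2n}]$ gives $T_{\ell^n}(P,Q)$ as a bilinear expression in which the action of $\pi-1$ on $R$ appears explicitly; the image of the restriction $T_{\ell^n}\colon\bar G\times\bar G\to\mu_{\ell^\infty}$ is therefore governed by how much $\pi-1$ factors through $\ell$-powers when restricted to $\bar G$. Here $k_\ell$ is, over all maximal isotropic $G\in\mathcal{W}$, the generic order of this factorization, so $k_\ell$-degeneracy on $\bar G\times\bar G$ amounts to saying that extra cancellation is taking place on this particular subgroup.

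For part~(1), I would use the class number~$1$ hypothesis on $K_0$ to classify horizontal $(\ell,\ell)$-isogeny kernels in terms of primes $\mathfrak{L}$ of $\mathcal{O}_K$ lying above primes of $\mathcal{O}_{K_0}$ dividing $\ell$. When $G$ arises from such an $\mathfrak{L}$ and $\bar G$ is its natural lift to $J[\ell^n]$, the restriction of $\pi-1$ to $\bar G$ acquires an extra factor coming from $\mathfrak{L}$; since $\mathfrak{L}$ lies above a prime of $K_0$ dividing $\ell$, this extra factor manifests after pairing as exactly one additional digit of $\ell$-adic cancellation, which is the $k_\ell$-degeneracy claimed.

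For part~(2), I would argue the contrapositive under condition~\eqref{condition}. Writing $\pi$ in the basis $1,\omega,\eta,\omega\eta$ with $\omega=(-1+\sqrt d)/2$ or $\sqrt d$, the quantities $v_\ell((a_3-a_4)/\ell)$ and $v_\ell((a_3-\ell a_4)/\ell^2)$ encode the $\ell$-adic valuation of the ``transverse'' part of $\pi-1$ at each of the (at most two) primes of $\mathcal{O}_K$ above $\ell\mathcal{O}_{K_0}$, whereas $\min(v_\ell(a_3),v_\ell(a_4))$ encodes it along any maximal isotropic subgroup that is \emph{not} stable under such a prime. The strict inequality in~\eqref{condition} then guarantees that the only $(\ell,\ell)$-kernels producing additional cancellation in the pairing are the horizontal ones, so degeneracy of $T_{\ell^n}$ on $\bar G\times\bar G$ forces $G$ to be horizontal.

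The hard part will be the explicit bookkeeping in part~(2): one must pick an $\mathcal{O}_K\otimes\Z_\ell$-basis of the Tate module adapted to the factorization of $\ell$ in $\mathcal{O}_{K_0}$, write $\pi$ as a $2\times 2$ block matrix over $\mathcal{O}_{K_0}\otimes\Z_\ell$ in this basis, and then identify the entries appearing in~\eqref{condition} with the precise obstructions detected by $T_{\ell^n}$. Once this identification is made, both directions reduce to a direct evaluation of the pairing on each type of maximal isotropic subgroup via the Galois-cohomological formula, combined with a matrix-rank argument showing that only the horizontal case produces the extra vanishing.
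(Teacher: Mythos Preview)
Your plan is quite different from the paper's, and it rests on an interpretation of condition~\eqref{condition} that the paper does not use and that you would have to establish from scratch.

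The paper never works with primes of $\mathcal{O}_K$ above $\ell$ or with an $\mathcal{O}_K\otimes\Z_\ell$-basis of the Tate module. Instead, both parts are proved by transporting the Tate pairing through the isogeny $I\colon J\to J'$ (Lemma~\ref{IsogenyPairing} and Lemma~\ref{divisors}) and then reading off whether $\mathrm{End}(J')$ is locally maximal at $\ell$ from the invariant $k'_\ell$ of the \emph{target} via Proposition~\ref{PropPrinc} and Theorem~\ref{MainResult}. For part~(1), in contrapositive form: if $T_{\ell^n}$ is non-degenerate on $\bar G\times\bar G$, pushing through $I$ and then lifting back to $J'[\ell^n]$ yields $k'_\ell\ge k_\ell+1$, hence $v_{\ell,\mathrm{End}(J')}(\pi)<v_{\ell,\mathcal{O}_K}(\pi)$ and $I$ is descending. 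For part~(2), the same transport (applied to both $I$ and the dual $I^{\dagger}$, with a short case split on whether $J'[\ell^n]$ is $\F_q$-rational) gives $k'_\ell\le k_\ell$, and Theorem~\ref{MainResult}---this is the only place condition~\eqref{condition} enters, via Lemma~\ref{CharacterizationMaximalOrder}---then forces $\mathrm{End}(J')$ to be locally maximal. No identification of horizontal kernels with ideals and no case analysis on the splitting of $\ell$ is ever performed.

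Your reading of condition~\eqref{condition}, namely that $v_\ell((a_3-a_4)/\ell)$ and $v_\ell((a_3-\ell a_4)/\ell^2)$ record the valuation of the transverse part of $\pi-1$ at the primes of $\mathcal{O}_K$ above $\ell\mathcal{O}_{K_0}$, is not how the condition arises in the paper. There it comes from an HNF computation on index-$\ell$ suborders of $\mathcal{O}_{K_0}+\mathcal{O}_{K_0}\eta$ (proof of Lemma~\ref{CharacterizationMaximalOrder}): the three linear combinations $a_4/\ell$, $(a_3-a_4)/\ell$, $(a_3-\ell a_4)/\ell^2$ correspond to the three possible shapes of the bottom $2\times 2$ block of the HNF change-of-basis matrix, not to primes above $\ell$. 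Turning this into a uniform prime-by-prime statement across all splitting types of $\ell$ in $K_0$ and $K$ would be the actual content of your part~(2), and it is not clear the single inequality~\eqref{condition} encodes exactly that in every case. Even if it does, you would then still need a separate argument linking degeneracy on a non-horizontal $\bar G$ to the \emph{suborder} valuations, whereas the paper gets this for free by looking at $J'$ and invoking Proposition~\ref{PropPrinc} there.
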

In view of application to the CRT method for Igusa class polynomial computation, we deduce an algorithm to compute kernels of horizontal isogenies efficiently. This generalizes a result on horizontal $\ell$-isogenies for genus 1 curves~\cite{IonJou2}.

This paper is organised as follows. In Section~\ref{EisentragerLauter} we recall briefly the Eisentr\"ager-Lauter algorithm for computing endomorphism rings. In Section~\ref{TatePairing} we give the definition and properties of the Tate pairing. Section~\ref{MaximalOrders} describes our algorithm for checking whether a Jacobian has locally maximal order at $\ell$. In Section~\ref{HorizontalIsogenies} we show that we can compute kernels of horizontal $(\ell,\ell)$-isogenies by some Tate pairing calculations. Finally, Section~\ref{Complexity} gives complexity estimates for our algorithms and compares their performance to that of the Freeman-Lauter algorithm.

\paragraph*{Notation and assumptions.} In this paper, we assume that principally polarized abelian surfaces are \textit{simple}, i.e. not isogenous to a product of elliptic curves. A quartic CM field $K$ is a totally imaginary quadratic extension of a totally real field. We denote by $K_0$ the real quadratic subfield of $K$ and we assume that $K_0$ has class number 1.
%TODO FIX assumptions on a,b
 A CM-type $\Phi$ is a couple of pairwise non-complex conjugate embeddings of K in $\C$
\begin{eqnarray*}
\Phi(z)=(\phi_1(z),\phi_2(z)).
\end{eqnarray*}
An abelian surface over $\C$ with complex multiplication by $\mathcal{O}_K$ is given by $A(\C)=\C^2/\Phi(\mathfrak{a})$, where $\mathfrak{a}$ is an ideal of $\mathcal{O}_K$  and $\Phi$ is a CM type. This variety is said to be of CM-type $(K,\Phi)$. A CM-type $(K,\Phi)$ is primitive if $\Phi$ cannot be obtained as a lift of a CM-type of a CM-subfield of $K$. The principally polarized abelian variety $\C^2/\Phi(\mathfrak{a})$ is simple if and only if its CM-type is \textit{primitive}~\cite{Shimura}.

\section{Computing the endomorphism ring of a jacobian}\label{EisentragerLauter}

The endomorphism ring of an ordinary jacobian $J$ over a finite field $\F_q$ ($q=p^n$) is an order in a quartic CM field $K$ such that
\begin{eqnarray*}
\Z[\pi,\bar{\pi}]\subset \textrm{End}(J) \subset \mathcal{O}_K,
\end{eqnarray*}
where $\Z[\pi,\bar{\pi}]$ denotes the order generated by $\pi$, the Frobenius endomorphism and by $\bar{\pi}$, the Verschiebung. We give a brief description of the Eisentr\"ager-Lauter algorithm~\cite{EisLau} which computes the endomorphism ring of $J$.
For a fixed order $\mathcal{O}$ in the lattice of orders of $K$, the algorithm tests whether this order is contained in $\End(J)$.
This is done by computing a $\Z$-basis for the order and checking whether the elements of this basis are endomorphisms of $J$ or not.
In order to test if $\alpha \in \mathcal{O}$ is an endomorphism, we write
\begin{eqnarray}\label{above}
\alpha=\frac{a+b\pi+c\pi^2+d\pi^3}{n},
\end{eqnarray}
with $a,b,c,d, n$ some integers such that $a,b,c,d$ have no common factor with $n$ ($n$ is the smallest integer such that $n\alpha \in \Z[\pi]$). The LLL algorithm computes a sequence $a,b,c,d,n$ such that $\alpha$ can be written as in Equation~\ref{above}.
In order to check whether $\alpha$ is an endomorphism or not, Eisentr\"ager and Lauter~\cite{EisLau} use the following result.
\begin{lemma}\label{torsion}
Let $A$ be an abelian variety defined over a field $k$ and $n$ an integer coprime to the characteristic of $k$. Let $\alpha:A\rightarrow A$ be an endomorphism of $A$. Then $A[n]\subset \textrm{Ker}~\alpha$ if and only if there is another endomorphism $\beta$ of $A$ such that $\alpha=n\cdot \beta$.
\end{lemma}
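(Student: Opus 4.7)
The plan is to prove the two directions separately, noting that the essential content is in the reverse implication and rests on the standard factorization property of separable isogenies.

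For the implication $(\Leftarrow)$, I would just compute: if $\alpha=n\cdot\beta$ then for any $P\in A[n]$ we have $\alpha(P)=\beta([n]P)=\beta(0)=0$, so $A[n]\subset \ker\alpha$.

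For the implication $(\Rightarrow)$, the key observation is that the multiplication-by-$n$ map $[n]:A\to A$ is an isogeny whose kernel is precisely $A[n]$, and because $n$ is coprime to the characteristic of $k$, this isogeny is separable (hence \'etale). I would then invoke the universal property of separable isogenies: given any morphism $\alpha:A\to A$ whose kernel contains $\ker[n]=A[n]$, there is a unique morphism $\beta:A\to A$ with $\alpha=\beta\circ[n]$. Concretely, since $[n]$ is a finite \'etale Galois cover of $A$ by itself with Galois group $A[n]$ (acting by translations), descent along this cover gives the factorization. Finally, since $\alpha(0)=0=[n](0)$, we get $\beta(0)=0$, and by the rigidity lemma any morphism of abelian varieties sending $0$ to $0$ is a group homomorphism, so $\beta\in\mathrm{End}(A)$ and $\alpha=n\cdot\beta$.

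The main step to pin down carefully is the factorization statement ``$A[n]\subset\ker\alpha$ implies $\alpha$ factors through $[n]$,'' which is the only non-formal ingredient. I expect this to be the main obstacle in a fully rigorous write-up, although it is standard: it can be stated cleanly as the assertion that for a separable isogeny $\phi:A\to B$, the map $\mathrm{Hom}(B,C)\to\{\psi\in\mathrm{Hom}(A,C):\ker\phi\subset\ker\psi\}$ given by $\beta\mapsto\beta\circ\phi$ is a bijection. Once this is granted, the rest of the argument reduces to the elementary verifications above.
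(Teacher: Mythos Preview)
Your argument is correct and is the standard proof of this fact. Note, however, that the paper does not actually supply its own proof of this lemma: it is stated without proof as a result borrowed from Eisentr\"ager and Lauter~\cite{EisLau}, and then immediately applied. So there is nothing in the paper to compare your argument against beyond the bare statement. Your write-up would serve perfectly well as the missing proof; the factorization of $\alpha$ through the separable isogeny $[n]$ is indeed the only substantive step, and your appeal to the universal property of separable isogenies (equivalently, descent along the \'etale cover $[n]$, or the quotient-by-finite-subgroup-scheme construction) is the right justification.
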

Using Lemma~\ref{torsion}, we get $\alpha\in \End(J)$ if and only if $a+b\pi+c\pi^2+d\pi^3$ acts as zero on the $n$-torsion.
Freeman and Lauter show that $n$ divides the index $[\mathcal{O}_K:\Z[\pi]]$ (see~\cite[Lemma 3.3]{FreLau}). Since $[\Z[\pi]:\Z[\pi,\bar{\pi}]$ is $1$ or $p$, we have that $n$ divides $[\mathcal{O}_K:\Z[\pi,\bar{\pi}]]$ if $(n,p)=1$. Moreover, Freeman and Lauter show that if $n$ factors as $\ell_1^{d_1}\ell_2^{d_2}\ldots \ell_r^{d_r}$, it suffices to check if
\begin{eqnarray*}
\frac{a+b\pi+c\pi^2+d\pi^3}{\ell_i^{d_i}},
\end{eqnarray*}
for every prime factor $\ell_i$ in the factorization of $n$.
The advantage of using this family of elements instead of $\alpha$ is that instead of working over the extension field generated by the coordinates of the $n$-torsion points, we may work over the field of definition of the $\ell_i^{d_i}$-torsion, for every prime factor $\ell_i$.
For a fixed prime $\ell$, Freeman and Lauter prove the following result, which allows computing a bound for the degree of the smallest extension field over which the $\ell$-torsion points are defined.

\begin{proposition}\cite[Prop. 6.2]{FreLau}\label{extensionField}
Let $J$ be the Jacobian of a genus 2 curve over $\F_q$ and suppose that $\textrm{End}(J)$ is isomorphic to the ring of integers $\mathcal{O}_K$ of the primitive quartic CM field $K$. Let $\ell\neq q$ be a prime number, and suppose $\F_{p^r}$ is the smallest field over which the points of $J[\ell]$ are defined. If $\ell$ is unramified in $K$, then $r$ divides one of the following:
\begin{itemize}
\item[(a)] $\ell-1$, if $\ell$ splits completely in $K$;
\item[(b)] $\ell^2-1$, if $\ell$ splits into two or three ideals in $K$;
\item[(c)] $\ell^3-\ell^2+\ell-1$, if $\ell$ is inert in $K$.
\end{itemize}
If $\ell$ ramifies in $K$, then $r$ divides one of the following:
\begin{itemize}
\item[(a)] $\ell^3-\ell^2$, if there is a prime over $\ell$ of ramification degree $3$, or if $\ell$ is totally ramified in $K$ and $\ell\leq 3$.
\item[(b)] $\ell^2-\ell$, in all other cases where $\ell$ factors into four prime ideals in $K$ (counting multiplicities).
\item[(c)] $\ell^3-\ell$, if $\ell$ factors into two or three prime ideals in $K$ (counting multiplicities).
\end{itemize}
\end{proposition}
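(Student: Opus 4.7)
The plan is to reduce the question to a purely module-theoretic one about $\mathcal{O}_K/\ell\mathcal{O}_K$ and then do a case analysis on the factorisation of $\ell$ in $\mathcal{O}_K$. First, I would exploit the CM hypothesis $\textrm{End}(J)\cong\mathcal{O}_K$: the $\ell$-adic Tate module $T_\ell J$ is a locally free $\mathcal{O}_K\otimes\Z_\ell$-module of rank $1$, hence $J[\ell]\cong\mathcal{O}_K/\ell\mathcal{O}_K$ as $\mathcal{O}_K/\ell\mathcal{O}_K$-modules. Under this identification the Frobenius $\pi\in\mathcal{O}_K\subset\textrm{End}(J)$ acts by multiplication by its residue class. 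Consequently $J[\ell]\subseteq J(\F_{q^r})$ if and only if $\pi^r\equiv 1\pmod{\ell\mathcal{O}_K}$, so the minimal $r$ is exactly the multiplicative order of $\pi$ in the unit group of $\mathcal{O}_K/\ell\mathcal{O}_K$.

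Next I would analyse $(\mathcal{O}_K/\ell\mathcal{O}_K)^\times$ using the factorisation of $\ell$. In the unramified split cases, $\mathcal{O}_K/\ell\mathcal{O}_K$ is a product of residue fields of sizes $\ell$ or $\ell^2$, so its unit group is a product of cyclic groups of orders dividing $\ell-1$ or $\ell^2-1$: this immediately gives parts (a) and (b) of the unramified statement. For the inert case $\mathcal{O}_K/\ell\mathcal{O}_K\cong\F_{\ell^4}$, whose unit group is cyclic of order $\ell^4-1$; the tighter bound $\ell^3-\ell^2+\ell-1=(\ell-1)(\ell^2+1)$ comes from the Weil-type relation $\pi\bar\pi=q\in\Z$. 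Since $\ell$ is inert in $K$ (and hence in $K_0$), complex conjugation on $\mathcal{O}_K/\ell$ corresponds to the $\F_{\ell^2}$-Frobenius on $\F_{\ell^4}$, so $\bar\pi\equiv\pi^{\ell^2}\pmod\ell$; thus $\pi^{\ell^2+1}\equiv q\in\F_\ell^\times$, and raising to the $(\ell-1)$-th power kills it.

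For the ramified cases the residue rings are no longer semisimple, and I would decompose $\mathcal{O}_K/\ell\mathcal{O}_K$ as a product of local Artinian rings according to the primes above $\ell$. If $\mathfrak{p}$ has ramification index $e$ and residue degree $f$, the corresponding local factor is a length-$e$ truncation with residue field $\F_{\ell^f}$, and its unit group has order $\ell^{f(e-1)}(\ell^f-1)$. Combining these across the primes for each factorisation pattern yields the exponents $\ell^3-\ell^2$ (a factor of ramification $3$, or total ramification with $\ell\leq3$ where the mass argument forces this), $\ell^2-\ell$ (four ramified primes contributing $\ell(\ell-1)$), and $\ell^3-\ell$ (two or three ramified primes with one residue field of degree $2$, giving $\textrm{lcm}(\ell(\ell-1),\ell^2-1)=\ell(\ell^2-1)$, or an analogous combination). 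One again uses $\pi\bar\pi=q\in\Z$ to eliminate the higher cyclotomic contribution in the residue fields of degree $>1$, exactly as in the inert case.

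The main obstacle I anticipate is the ramified bookkeeping: one must check that every admissible factorisation type of $\ell$ in a primitive quartic CM field actually falls into one of the listed buckets, and that the contribution of complex conjugation is correctly accounted for when a prime is self-conjugate versus when it is swapped with another. I would handle this by first classifying the possibilities for the action of $\mathrm{Gal}(K/\Q)$ (or of complex conjugation when $K/\Q$ is non-Galois) on the primes above $\ell$, then applying the local exponent computation above to each orbit; the primitivity assumption on $K$ rules out pathological splittings and guarantees that the three ramified buckets in the statement are exhaustive.
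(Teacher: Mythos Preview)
The paper does not prove this proposition: it is quoted from Freeman and Lauter \cite[Prop.~6.2]{FreLau} and used as a black box (to bound extension degrees in Sections~\ref{EisentragerLauter} and~\ref{Complexity}). There is therefore no proof in the present paper to compare your proposal against.

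That said, your approach is the standard one and is correct in outline. Identifying $J[\ell]$ with $\mathcal{O}_K/\ell\mathcal{O}_K$ via the CM hypothesis and reducing the question to the multiplicative order of $\pi$ in $(\mathcal{O}_K/\ell\mathcal{O}_K)^\times$ is exactly the right move, and your treatment of the unramified cases (including the use of $\pi\bar\pi=q$ together with $\bar\pi\equiv\pi^{\ell^2}\pmod\ell$ in the inert case to cut $\ell^4-1$ down to $(\ell-1)(\ell^2+1)$) is correct. Two remarks on the ramified part. First, the phrase ``the mass argument forces this'' hides the actual computation: in $\mathcal{O}_K/\mathfrak p^{e}$ one has characteristic $\ell$, so $(1+x)^{\ell}=1+x^{\ell}$ and the $\ell$-part $1+\mathfrak m$ of the unit group has exponent $\ell$ when $e\le\ell$ and exponent $\ell^{2}$ when $\ell<e\le\ell^{2}$. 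This is precisely why the totally ramified case ($e=4$) falls into bucket (b) with exponent $\ell(\ell-1)$ for $\ell\ge 5$ but into bucket (a) with exponent $\ell^{2}(\ell-1)$ for $\ell\le 3$; you should make this explicit. Second, invoking $\pi\bar\pi=q$ again in the ramified cases is harmless but unnecessary: once the exponent of $1+\mathfrak m$ is pinned down, the naive exponent of $(\mathcal{O}_K/\ell)^\times$ already equals the bounds $\ell^{3}-\ell^{2}$, $\ell^{2}-\ell$, $\ell^{3}-\ell$ stated, with no further cancellation required.
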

Once we computed the extension field over which the $\ell$-torsion is defined, the $\ell^d$-torsion will be computed using the following result~\cite{FreLau}.
\begin{proposition}\cite[Prop. 6.3]{FreLau}\label{moreTorsion}
Let $A$ be an ordinary abelian variety defined over a finite field $\F_q$ and let $\ell$ be a prime number not equal to the characteristic of $\F_q$. Let $d$ be a positive integer. If the $\ell$-torsion points of $A$ are defined over $\F_q$, then the $\ell^d$-torsion points are defined over $\F_{q^{\ell^{d-1}}}$.
\end{proposition}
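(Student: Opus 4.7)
The plan is to lift the hypothesis to an identity in $\End(A)$ via Lemma~\ref{torsion} and then expand a binomial. Since $A[\ell] \subseteq A(\F_q)$, the endomorphism $\pi - 1$ (where $\pi$ is the $q$-Frobenius) vanishes on $A[\ell]$, so Lemma~\ref{torsion} yields $\phi \in \End(A)$ with $\pi - 1 = \ell\phi$, i.e.\ $\pi = 1 + \ell\phi$ in $\End(A)$.

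Because $1$ and $\phi$ commute, I will expand
\begin{equation*}
\pi^{\ell^{d-1}} - 1 \;=\; (1 + \ell\phi)^{\ell^{d-1}} - 1 \;=\; \sum_{k=1}^{\ell^{d-1}} \binom{\ell^{d-1}}{k}\,\ell^k\, \phi^k,
\end{equation*}
and then bound each summand's $\ell$-adic valuation using the identity $v_\ell\!\bigl(\binom{\ell^{d-1}}{k}\bigr) = (d-1) - v_\ell(k)$ for $1 \le k \le \ell^{d-1}$. This identity follows from writing $\binom{\ell^{d-1}}{k} = \frac{\ell^{d-1}}{k}\binom{\ell^{d-1}-1}{k-1}$ and observing, via Lucas' theorem, that $\binom{\ell^{d-1}-1}{k-1}$ is coprime to $\ell$ (every base-$\ell$ digit of $\ell^{d-1}-1$ equals $\ell-1$). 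Thus the $k$-th summand is divisible by $\ell^{(d-1)-v_\ell(k)+k}$, and since $v_\ell(k) \le k-1$ for every $k \ge 1$ (as $\ell^{k-1} \ge k$), this exponent is at least $d$.

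Hence $\pi^{\ell^{d-1}} = 1 + \ell^d \psi$ for some $\psi \in \End(A)$. Applying this to any $P \in A[\ell^d]$ gives $\pi^{\ell^{d-1}}(P) = P$, i.e.\ the Frobenius of $\F_{q^{\ell^{d-1}}}$ fixes $A[\ell^d]$ pointwise, so $A[\ell^d] \subseteq A(\F_{q^{\ell^{d-1}}})$. The only nontrivial ingredient is the combinatorial identity above, together with the very mild inequality $v_\ell(k) \le k-1$; the proof is otherwise just one binomial expansion. The ordinarity hypothesis plays no role in this argument, since $\ell \neq \mathrm{char}(\F_q)$ already ensures that multiplication by $\ell$ is separable and that $A[\ell^d]$ is the expected $(\Z/\ell^d\Z)$-module on which the binomial computation acts.
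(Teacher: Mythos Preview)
Your argument is correct. The paper does not supply its own proof of this proposition; it is quoted verbatim from Freeman--Lauter \cite[Prop.~6.3]{FreLau} and used as a black box. So there is nothing to compare against here, but your proof stands on its own: the use of Lemma~\ref{torsion} to write $\pi = 1 + \ell\phi$ in $\End(A)$, followed by the binomial expansion and the valuation estimate $v_\ell\!\binom{\ell^{d-1}}{k} + k = (d-1) - v_\ell(k) + k \ge d$, is clean and complete. Your observation that ordinarity is not actually used is also correct; the only hypotheses consumed are $\ell \ne p$ (so that Lemma~\ref{torsion} applies) and the fact that $1$ commutes with any endomorphism, so the binomial formula holds regardless of whether $\End(A)$ is commutative.
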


\section{Background on the Tate pairing}\label{TatePairing}
Consider now $H$ a smooth irreducible genus 2 curve defined over a finite field $\F_q$, with $q=p^r$, whose equation is
\begin{equation}
y^2+h(x)y=f(x),
\end{equation}
with $h,f\in \F_q[x]$, $\textrm{deg}\,h\leq 2$, $f$ monic and $\deg f=5,6$.
Let $J$ be the jacobian of $H$ and denote by $\bar{\F}_q$ the algebraic closure of $\F_q$ and by $G_{\bar{\F}_q/\F_q}=\textrm{Gal}(\bar{\F}_q/{\F_q})$ the Galois group. Let $m\in \N$ and consider $J[m]$ the subgroup of $m$-torsion, i.e. the points of order $m$. We denote by $\mu_m\subset \bar{\F}_{q}$ the group of $m$-th roots of unity. The $m$-Weil pairing
\begin{eqnarray*}
W_m:J[m]\times \hat{J}[m] &\rightarrow &\mu_m\\
\end{eqnarray*}
is a bilinear, non-degenerate map and it commutes with the action of $G$. If $\lambda: A \rightarrow \hat{A}$ is a polarization,
then we define the Weil pairing as
\begin{eqnarray*}
W_m^{\lambda}:J[m]\times J[m] &\rightarrow &\mu_m\\
(P,Q)& \rightarrow & W_m(P,\lambda(Q)).
\end{eqnarray*}
Given a subgroup $G\subset J[m]$, we say that $G$ is \textit{isotropic} with respect to the Weil pairing if the Weil pairing restricted to $G\times G$ is trivial. It is \textit{maximal isotropic} if it is isotropic and it is not properly contained in any other such subgroup.
\noindent
We denote by $H^i(G_{\bar{\F}_q/\F_q},J)$ the $i$-th Galois cohomology group, for $i\geq 0$. 

Consider the exact sequence $0\rightarrow J[m]\rightarrow J(\bar{\F}_q)\rightarrow J(\bar{\F_q})\rightarrow 0$. Then by taking Galois cohomology we get the connecting morphism
\begin{eqnarray*}
\delta: J(\F_q)/mJ(\F_q)=H^0(G_{\bar{\F}_q/\F_q},J)/mH^0(G_{\bar{\F}_q/\F_q},J)&\rightarrow & H^1(G_{\bar{\F}_q/\F_q},J[m])\\
P&\rightarrow & F_P,
\end{eqnarray*}
where the map $F_P$ is defined as follows
\begin{eqnarray*}
F_P:G_{\bar{\F}_q/\F_q} & \rightarrow & J(\bar{\F}_q)[m]\\
\sigma & \rightarrow & \sigma(\bar{P})-\bar{P},
\end{eqnarray*}
where $\bar{P}$ is any point such that $m\bar{P}=P$.
Using the connecting morphism and the Weil pairing, we define the $m$-Tate pairing as follows
\begin{eqnarray*}
t_m:J(\F_q)/mJ(\F_q)\times \hat{J}[m](\F_q)&\rightarrow &H^1(G,\mu_m)\\
(P,Q) &\rightarrow & [\sigma \rightarrow W_m(F_P(\sigma),Q)].
\end{eqnarray*}
For a fixed polarization $\lambda:J\rightarrow \hat{J}$ we define a pairing on $J$ itself
\begin{eqnarray*}
t_m^{\lambda}:J(\F_q)/mJ(\F_q)\times J[m](\F_q)&\rightarrow &\F_q^*/\F_q^{*m}\\
(P,Q) &\rightarrow & t_m(P,\lambda(Q)).
\end{eqnarray*}
Most often, if $J$ has a distinguished principal polarization and there is no risk of confusion, we write simply $t_m(\cdot,\cdot)$ instead of $t_m^{\lambda}(\cdot,\cdot)$.

Lichtenbaum~\cite{Lichtenbaum} describes a version of the Tate pairing on Jacobian varieties. More precisely, suppose we have $m|\#J(\F_q)$ and denote by $k$ the \textit{embedding degree with respect to } $m$, i.e. the smallest integer $k\geq 0$ such that $m|q^k-1$. Let $D_1\in J(\F_{q^k})$ and $D_2\in J[m](\F_{q^k})$ two divisor classes, represented by two divisors such that $\textrm{supp}(D_1)\cap \textrm{supp}(D_2)=\emptyset$. Since $D_2$ has order $m$, there is a function $f_{m,D_2}$ such that $\textrm{div}(f_{m,D_2})=mD_2$. The Tate pairing of the divisor classes $D_1 $ and $D_2$ is computed as
\begin{eqnarray*}
t_m(D_1,D_2)=f_{D_2}(D_1).
\end{eqnarray*}
Moreover, in computational applications, it is convenient to work with a unique value of the pairing. Given that $\F_{q^k}^*/(\F_{q^k}^*)^m\simeq \mu_m$, we use the \textit{reduced Tate pairing}, given by
\begin{eqnarray*}
T_m(\cdot,\cdot): J(\F_{q^k})/mJ(\F_{q^k})\times J[m](\F_{q^k})&\rightarrow &\mu_m\\
(P,Q) &\rightarrow & t_m(P,Q)^{(q^k-1)/m}.
\end{eqnarray*}
The function $f_{m,D_2}(D_1)$ is computed using Miller's algorithm~\cite{Miller} in $O(\log m)$ operations in $\F_{q^k}$.
Since $H^1(G_{\bar{\F}_{q^k}/\F_{q^k}},\mu_m)\simeq \mu_m$ by Hilbert's 90 theorem, it follows that there is an isomorphism $H^1(G_{\bar{\F}_{q^k}/\F_{q^k}},\mu_m)\simeq H^1(\textrm{Gal}(\F_{q^{km}}/\F_{q^k}),\mu_m)$. Since $H^1(Gal(\F_{q^{km}}/\F_{q^k}),\mu_m)\simeq \mu_m$, we may compute the Tate pairing as
\begin{eqnarray*}
t_m(\cdot,\cdot):J(\F_{q^k})/mJ(\F_{q^k})\times \hat{J}[m](\F_{q^k})&\rightarrow &\mu_m\\
(P,Q) &\rightarrow &  W_m(F_P(\pi),Q),
\end{eqnarray*}
where $\pi$ is the Frobenius of the finite field $\F_{q^k}$.
\section{Pairings and endomorphism ring computation}\label{MaximalOrders}
In this section we relate some properties of the Tate pairing to the isomorphism class of the endomorphism ring of the Jacobian. Let $\ell$ be a prime odd number. We give a method to check whether the endomorphism ring is locally maximal at $\ell$ (i.e. the index $[\mathcal{O}_K:\mathcal{O}]$ is not divisible by $\ell$) by computing a certain number of pairings.

Let $H$ be a genus 2 smooth irreducible curve defined over a finite field
$\F_q$, $J$ its jacobian and suppose that $J[\ell^n]\subseteq J(\F_q)$ and
that $J[\ell^{n+1}]\nsubseteq J(\F_q)$.
% Moreover, for two divisor classes $D_1,D_2\in G$, we say that they have non-degenerate pairing if $T_{\ell^n}(D_1,D_2)$ is a $\ell^{k_{\ell}}$-th root of unity and degenerate otherwise.

\begin{lemma}\label{antisymmetry}
The reduced Tate pairing defined as
\begin{eqnarray*}
T_{\ell^n}:J[\ell^n]\times J[\ell^n]\rightarrow \mu_{\ell^n}
\end{eqnarray*}
is $k_{\ell}$-antisymmetric, i.e. $T_{\ell^n}(\bar{D}_1,\bar{D}_2)T_{\ell^n}(\bar{D}_2,\bar{D}_1)\in \mu_{\ell^{k_{\ell}}}$, for all $\bar{D}_1,\bar{D}_2\in J[\ell^n]$.
\end{lemma}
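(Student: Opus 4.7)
The plan is to reduce the $k_\ell$-antisymmetry of $T_{\ell^n}$ to a uniform bound on \emph{self-pairings}. Since the Tate pairing is $\Z$-bilinear in each argument, expanding $T_{\ell^n}(P+Q,P+Q)$ and cancelling yields
\begin{eqnarray*}
T_{\ell^n}(P,Q)\,T_{\ell^n}(Q,P)=T_{\ell^n}(P+Q,P+Q)\cdot T_{\ell^n}(P,P)^{-1}\cdot T_{\ell^n}(Q,Q)^{-1}.
\end{eqnarray*}
So it would be enough to show that $T_{\ell^n}(R,R)\in\mu_{\ell^{k_\ell}}$ for every $R\in J[\ell^n]$, since $\mu_{\ell^{k_\ell}}$ is a subgroup of $\mu_{\ell^n}$ and the right-hand side would then be a product of three elements in this subgroup.

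The heart of the argument is therefore to show that every $R\in J[\ell^n]$ lies in some rank $2$ isotropic subgroup $G\in\mathcal{W}$. The $\ell^n$-Weil pairing makes $J[\ell^n]\simeq(\Z/\ell^n)^4$ into a non-degenerate symplectic module over $\Z/\ell^n$. Because the form is alternating, $R$ belongs to its own orthogonal complement $\langle R\rangle^\perp$, which has $\Z/\ell^n$-rank $3$. When $R$ has maximal order $\ell^n$, any element of $\langle R\rangle^\perp$ independent of $R$ completes $\langle R\rangle$ to a rank $2$ Lagrangian. For $R$ of non-maximal order $\ell^a$, I would write $R=\ell^{n-a}R'$ with $R'$ of order $\ell^n$ and build the Lagrangian around $R'$; it then contains $R=\ell^{n-a}R'$ automatically. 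Once such a $G\ni R$ has been produced, the very definition of $k_\ell$ as the maximum taken over all $G\in\mathcal{W}$ and all pairs of elements of $G$ forces $T_{\ell^n}(R,R)\in\mu_{\ell^{k_\ell}}$.

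Substituting the self-pairing bound back into the bilinearity identity, each of the three factors on the right-hand side lies in $\mu_{\ell^{k_\ell}}$, hence so does $T_{\ell^n}(P,Q)\,T_{\ell^n}(Q,P)$, which is the claimed $k_\ell$-antisymmetry. The only delicate step I anticipate is the Lagrangian-extension lemma — routine symplectic linear algebra over $\Z/\ell^n$, but elements of non-maximal order need some care. The conceptual point worth emphasising is that controlling a symmetric bilinear quantity by its diagonal values is precisely what lets the isotropy condition built into the definition of $k_\ell$ be applied directly to pairs $(P,Q)$ that need not themselves lie in a common isotropic subgroup.
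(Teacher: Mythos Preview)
Your argument is correct and indeed cleaner than the paper's. Both proofs rest on the same two pillars: (i) every $R\in J[\ell^n]$ lies in some $G\in\mathcal{W}$, so the definition of $k_\ell$ bounds all self-pairings; and (ii) the symmetrised product $T_{\ell^n}(P,Q)T_{\ell^n}(Q,P)$ is controlled by self-pairings. You establish (ii) directly via the bilinearity identity $T(P+Q,P+Q)=T(P,P)T(P,Q)T(Q,P)T(Q,Q)$. The paper instead argues by contradiction: assuming the symmetrised product is a primitive $\ell^r$-th root with $r>k_\ell$, it forms the quadratic form $\mathcal{P}(a,b)=\log T(a\bar D_1+b\bar D_2,\,a\bar D_1+b\bar D_2)$, observes that its cross-term is nonzero mod $\ell^{n-r}$, and concludes that some linear combination $\bar D=a\bar D_1+b\bar D_2$ has self-pairing of order $\ell^r$, contradicting (i). This is the same bilinearity expansion you wrote down, just packaged as a root-counting argument rather than an explicit rearrangement. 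Your version is shorter and avoids the delicate bookkeeping of reducing the quadratic form mod~$\ell$; the paper's version, on the other hand, makes explicit that ``most'' elements of $\langle\bar D_1,\bar D_2\rangle$ witness the bound, which is mildly informative but not needed for the lemma itself. Your handling of elements of non-maximal order (lifting $R=\ell^{n-a}R'$ to $R'$ of full order before constructing the Lagrangian) is the right way to make the symplectic step rigorous over $\Z/\ell^n$; the paper simply asserts the existence of such a $W$ without this detail.
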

\begin{proof}
Indeed, assume that there are $\bar{D}_1,\bar{D}_2\in J[\ell^n]$ such that $T_{\ell^n}(\bar{D}_1,\bar{D}_2)T_{\ell^n}(D_2,\bar{D}_1)\in \mu_{\ell^n}\backslash \mu_{\ell^{k_{\ell}}}$.
We denote by $G=\langle \bar{D}_1,\bar{D}_2 \rangle$ and by $r>k_{\ell}$ the largest integer such that $T_{\ell^n}(\bar{D}_1,\bar{D}_2)T_{\ell^n}(\bar{D}_2,\bar{D}_1)$ is an $\ell^r$-th primitive root of unity. Then the polynomial
\begin{eqnarray*}
\mathcal{P}(a,b)=\log T_{\ell^n}(\bar{D}_1,\bar{D}_1)a^2+\log (T_{\ell^n}(\bar{D}_1,\bar{D}_2)T_{\ell^n}(\bar{D}_2,\bar{D}_1))ab+\log T_{\ell^n}(\bar{D}_2,\bar{D}_2)b^2,
\end{eqnarray*}
where the $\log$ function is computed with respect to some fixed $\ell^n$-th root of unity, is zero $\bmod~\ell^{n-r-1}$ and non-zero $\bmod~\ell^{n-r}$. Dividing by $\ell^{n-r-1}$, we may view $\mathcal{P}$ as a polynomial in $\F_{\ell}[a,b]$. Since $\mathcal{P}$ is a quadratic non-zero polynomial, it has at most two roots. These correspond to two divisor classes in $G$, with $r$-degenerate self-pairing. Hence, there is at least one divisor $\bar{D}\in G$ such that $T_{\ell^n}(\bar{D},\bar{D})$ is a $\ell^{r}$-th root of unity. Since there is at least one maximal isotropic subgroup $W\in \mathcal{W}$ with respect to the Weil pairing such that $\bar{D}\in W$, this contradicts the definition of $k_{\ell}$.
\end{proof}
Let $\mathcal{O}$ be an order of $K$ and let $\theta\in \mathcal{O}$. We define
$$v_{\ell,\mathcal{O}}(\theta):=\max _{m\geq 0}\{m:\theta \in \Z+\ell^m\mathcal{O}\}.$$
We denote by $1,\delta,\gamma,\eta$ a $\Z$-basis of $\mathcal{O}$ and and we write $\theta=a_1+a_2\delta+a_3\gamma+a_4\eta$. Then we compute $v_{\ell,\mathcal{O}}$ as
\begin{eqnarray}\label{basis}
v_{\ell,\mathcal{O}}(\theta)=v_{\ell}(\gcd(a_2,a_3,a_4)).
\end{eqnarray}
Note that the value of $v_{\ell,\mathcal{O}}(\theta)$ is independent of the choice of the basis.
We say that $\theta$ is divisible by $t\in \Z$ if we have $\theta\in t\mathcal{O}$. We say that $\theta$ is exactly divisible by $\ell^n$ if it is divisible by $\ell^n$ and it is not divisible by $\ell^{n+1}$.
The following lemma gives a criterion to check whether an order is locally maximal at $\ell$ or not.

\begin{lemma}\label{CharacterizationMaximalOrder}
Let $K:=\Q(\eta)$ be a quartic CM field, with $\eta=i\sqrt{a+b\frac{-1+\sqrt{d}}{2}}$, if $d\equiv 1 \bmod 4$ and $\eta=i\sqrt{a+b\sqrt{d}}$, if $d\equiv 2,3 \bmod 4$. We assume that $a,b,d\in \Z$ and that $d$ and $a^2-b^2d$ are square free. Assume that $K_0=\Q(\sqrt{d})$ has class number 1. Let $\ell>2$ a prime number that does not divide $\textrm{lcm}(a,b,d)$. Let $\mathcal{O}_K$ be the maximal order of $K$ and $\mathcal{O}$ an order such that $[\mathcal{O}_K: \mathcal{O}]$ is divisible by $\ell$.
Let $\pi\in \mathcal{O}$ such that $N_{K/K_0}(\pi)\in \Z$ is not divisible by $\ell$ and that $v_{\ell,\mathcal{O}_K}(\pi)>0$. We suppose that $\pi=a_1+a_2\frac{-1+\sqrt{d}}{2}+(a_3+a_4\frac{-1+\sqrt{d}}{2})\eta$, if $d\equiv 1 \bmod 4$ and $\pi=a_1+a_2\sqrt{d}+(a_3+a_4\sqrt{d})\eta$, if $d\equiv 2,3 \bmod 4$. If $\max (v_{\ell}(\frac{a_3-a_4}{\ell}),v_{\ell}(\frac{a_3-\ell a_4}{\ell^2})) <\min (v_{\ell}(a_3), v_{\ell}(a_4))$, then $v_{\ell,\mathcal{O}}(\pi)<v_{\ell,\mathcal{O}_K}(\pi)$.
\end{lemma}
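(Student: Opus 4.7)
The plan is to reduce the claim to showing a single explicit element fails to lie in $\mathcal{O}$, then to classify the relevant orders locally at $\ell$ and match their membership conditions with the three valuation inequalities implicit in the hypothesis.

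Setting $v := v_{\ell, \mathcal{O}_K}(\pi) \ge 1$, and using the easy identity $\Z \cap \ell^v \mathcal{O}_K = \ell^v \Z$, any integer $n$ with $\pi - n \in \ell^v \mathcal{O}_K$ must satisfy $n \equiv a_1 \pmod{\ell^v}$. Consequently $v_{\ell, \mathcal{O}}(\pi) \ge v$ if and only if
\[
\alpha := \frac{\pi - a_1}{\ell^v} = \frac{a_2}{\ell^v}\delta + \frac{a_3}{\ell^v}\eta + \frac{a_4}{\ell^v}\delta\eta
\]
belongs to $\mathcal{O}$, and the goal is to show this fails under the stated hypothesis.

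The first step extracts information from $N_{K/K_0}(\pi) \in \Z$. In the $d \equiv 2, 3 \pmod 4$ case (the other case being analogous), $\pi\bar\pi = (a_1+a_2\delta)^2 + (a_3+a_4\delta)^2 (a+b\delta)$, and setting its $\delta$-coefficient equal to zero gives
\[
2 a_1 a_2 + b a_3^2 + b d a_4^2 + 2 a a_3 a_4 = 0.
\]
The last three summands are divisible by $\ell^{2v}$, while $\ell\nmid 2 a_1$ (from $\ell>2$ and $\ell \nmid N_{K/K_0}(\pi)$). Hence $v_\ell(a_2) \ge 2v > v$, and in particular $\min(v_\ell(a_3), v_\ell(a_4)) = v$.

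Next I would carry out a local analysis at $\ell$. Since $K_0$ has class number $1$ and $\ell \nmid d$, the ring $\mathcal{O}_{K_0,\ell} := \mathcal{O}_{K_0}\otimes \Z_\ell$ is \'etale over $\Z_\ell$, splitting as $\Z_\ell \times \Z_\ell$ (if $d$ is a square mod $\ell$) or as $\Z_{\ell^2}$ (otherwise). Under the (nontrivial) hypothesis that $\mathcal{O}_{K_0,\ell} \subseteq \mathcal{O}\otimes\Z_\ell$, every candidate $\mathcal{O}\otimes\Z_\ell$ has the form $\mathcal{O}_{K_0,\ell} + \tau\mathcal{O}_{K_0,\ell}\eta$ for some $\tau \in \mathcal{O}_{K_0,\ell}$ with $\ell \mid N_{K_0/\Q}(\tau)$, and the condition $\alpha \in \mathcal{O}$ becomes $\tau \mid \pi_1/\ell^v$ in $\mathcal{O}_{K_0,\ell}$, with $\pi_1 := a_3+a_4\delta$. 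Running through the possible $\tau$ up to units yields three mutually exclusive types of obstruction: a "diagonal" case, which requires $\min(v_\ell(a_3), v_\ell(a_4)) > v$ and is already ruled out; and two "partial" cases (corresponding either to the two primes above $\ell$ in the split situation, or to a single index-$\ell^2$ suborder in the inert situation), which after a Hensel-type lift of the roots of the minimal polynomial of $\delta$ translate respectively into
\[
v_\ell\!\bigl((a_3-a_4)/\ell\bigr) \ge v \qquad\text{and}\qquad v_\ell\!\bigl((a_3-\ell a_4)/\ell^2\bigr) \ge v.
\]
The hypothesis of the lemma is exactly the simultaneous failure of all three conditions, giving $\alpha \notin \mathcal{O}$.

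The main technical obstacle is twofold. First, one must establish $\mathcal{O}_{K_0,\ell} \subseteq \mathcal{O}\otimes \Z_\ell$: this does not follow merely from $\pi, \bar\pi \in \mathcal{O}$ (since $a_2$ need not be a unit modulo $\ell$) and may require ruling out a small family of exotic orders containing $\pi$ but not $\mathcal{O}_{K_0}$ locally. Second, one must verify that the two partial membership conditions match the explicit valuation inequalities of the statement, in particular justifying the asymmetric $\ell^2$-denominator of the second expression, which reflects a genuine index-$\ell^2$ obstruction specific to the inert configuration.
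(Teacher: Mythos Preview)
Your overall strategy mirrors the paper's: both first exploit $N_{K/K_0}(\pi)\in\Z$ to obtain $v_\ell(a_2)>\min(v_\ell(a_3),v_\ell(a_4))$, and then classify the possible non-maximal orders to see that in each case one of the ``new'' coordinates of $\pi$ has strictly smaller $\ell$-adic valuation. The difference is purely in how the classification is carried out: the paper works integrally, takes an HNF basis $\{1,\delta',\gamma',\eta'\}$ of $\mathcal{O}$ with respect to $\{1,\delta,\gamma,\eta\}$, and does an explicit case analysis on the entries $a_{33},a_{34},a_{44}$ of the transition matrix, reading off $a_3',a_4'$ directly (this is where the expressions $(a_3-a_4)/\ell$, $(a_3-\ell a_4)/\ell^2$, and $a_4/\ell$ arise). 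Your approach is more structural, working locally and parametrising $\mathcal{O}_{K_0,\ell}$-orders by a single element $\tau$.

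The gap you flag yourself is the genuine difference between the two arguments. The HNF route never needs $\mathcal{O}_{K_0,\ell}\subseteq\mathcal{O}\otimes\Z_\ell$: whatever $a_{22}$ is, the last two coordinates $a_3',a_4'$ are determined by the lower-right block of the HNF, and it is only those that are analysed. Your route, as written, does need that inclusion, and it is not automatic (e.g.\ $\Z_\ell+\ell\delta\Z_\ell+\ell\eta\Z_\ell+\ell^2\delta\eta\Z_\ell$ is a legitimate order not containing $\delta$). The fix is an enlargement step: since $v_{\ell,\mathcal{O}}(\pi)\le v_{\ell,\mathcal{O}'}(\pi)$ whenever $\mathcal{O}\subseteq\mathcal{O}'$, it suffices to prove the inequality for any proper over-order $\mathcal{O}'$; and one checks (using $\ell\nmid abd$, exactly as in the paper's observation that $\eta\in\mathcal{O}\Rightarrow\delta\in\mathcal{O}$) that the ring generated by $\mathcal{O}_\ell$ and $\mathcal{O}_{K_0,\ell}$ is still proper in $\mathcal{O}_{K,\ell}$. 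Once that is in place, your $\tau$-classification goes through; the matching of the two ``partial'' conditions to the two explicit numerators is then a direct computation entirely parallel to the paper's HNF cases, so your second stated obstacle is more bookkeeping than a real difficulty.
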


\begin{proof}
We denote by $\mathcal{O}_1=\mathcal{O}_{K_0}+\mathcal{O}_{K_0}\eta$. Since $\ell>2$, it suffices to show that $v_{\ell,\mathcal{O}\cap \mathcal{O}_1}(\pi)<v_{\ell,\mathcal{O}_1}(\pi)$. We will therefore assume, without restricting the generality, that $\mathcal{O}\subset \mathcal{O}_1$.
Let $\delta=\frac{-1+\sqrt{d}}{2}$ if $d\equiv 1 \bmod 4$ and $\delta=\sqrt{d}$, if $d\equiv 2,3 \bmod 4$ and let $\gamma:=\delta\eta$.
Then $1,\delta,\gamma,\eta$ is a basis for $\mathcal{O}_1$. We write $\pi=a_1+a_2\delta+a_3\gamma+a_4\eta$. By writing down the norm condition for $d\equiv 2,3 \bmod 4$ 
\begin{eqnarray*}
\left (a_1+a_2\sqrt{d}+(a_3+a_4\sqrt{d})i\sqrt{a+b\sqrt{d}}\right )\left (a_1+a_2\sqrt{d}-(a_3+a_4\sqrt{d})i\sqrt{a+b\sqrt{d}}\right ) \in \Z,
\end{eqnarray*}
we get that
\begin{eqnarray}\label{NormEquation1}
2a_1a_2+a_3^2b+a_4^2bd+2aa_3a_4=0.
\end{eqnarray}
Similarly, for $d\equiv 1 \bmod 4$, we have
\label{NormEquation2}
\begin{eqnarray}
  -\frac{a_2^2}{2}+a_1a_2-\frac{aa_4^2}{2}+a_3a_4(a-b)+\frac{a_3^2b}{2}+\frac{a_4^2(1+d)b}{8}-\frac{a_4^2(2a-b)}{4}=0.
\end{eqnarray}
Since $\ell\nmid a_1$, equations~\eqref{NormEquation1} and~\eqref{NormEquation2} imply that $v_{\ell}(a_2)>\textrm{min}(v_{\ell}(a_3),v_{\ell}(a_4))$. Since there is always an order $\mathcal{O}'$ such that $\mathcal{O}\subset \mathcal{O}'\subset\mathcal{O}_1$ such that $[\mathcal{O}_1:\mathcal{O}']$ is a power of $\ell$, it suffices to prove the lemma in the case $[\mathcal{O}_1:\mathcal{O}]$ is a power of $\ell$. For the order $\mathcal{O}$, we choose $\{1,\delta',\gamma',\eta'\}$ a HNF basis with respect to $\{1,\delta,\gamma,\eta\}$.
We denote by $(a_{i,j})_{1\leq i,j\leq 4}$ the corresponding transformation matrix. Then $[\mathcal{O}_1:\mathcal{O}]=\prod _{1\leq i\leq 4} a_{i,i}$. Note that neither $\eta$ nor $\gamma$ are in $\mathcal{O}$. Otherwise, $\mathcal{O}$ is the maximal order. Indeed, assume $\eta\in \mathcal{O}$. Since $\ell$ divides neither $a$ nor $b$, it follows that $\delta\in \mathcal{O}$. This implies that $\mathcal{O}$ is $\mathcal{O}_1$.
We consider the decomposition of $\pi$ over the basis $\{1,\delta',\gamma',\eta'\}$
\begin{eqnarray*}
\pi=a'_1+a'_2\delta'+a'_3\gamma'+a'_4\eta',a'_i\in \Z.
\end{eqnarray*}
Since $\eta\notin \mathcal{O}$, we know that $a_{44}$ is $\ell$. If $a_{33}$ is divisible by $\ell$, then $v_{\ell}(a'_3)<v_{\ell}(a_3)$. If $a_{34}=1$, then $a'_4=-(a_3-\ell a_4)/\ell^2$. If $a_{34}=0$, then $a'_4=a_4/\ell$. If $a_{33}=1$, it follows that $a_{34}=1$ (otherwise we would have $\gamma\in \mathcal{O}$). Then $a'_3=a_3$ and $a'_4=-(a_3-a_4)/\ell$. We conclude that $v_{\ell,\mathcal{O}}(\pi)<v_{\ell,\mathcal{O}_K}(\pi)$.
\end{proof}

Since we know that $J[\ell^n]$ is $\F_q$-rational, while $J[\ell^{n+1}]$ is
not, Lemma~\ref{torsion} implies that $\pi-1$ is exactly divisible by
$\ell^n$. Moreover, the Frobenius matrix on the Tate module is the identity
matrix $I_4\bmod \ell^n$. In following lemma, we compute the matrix of the Frobenius on the Tate module.

\begin{lemma}\label{Matrix}
Let $J$ be a abelian surface defined over a finite field $\F_q$ and $\pi$ the Frobenius endomorphism. Then the largest integer $m$ such that the matrix of the Frobenius endomorphism on the $\ell$-Tate module is of the form
\begin{eqnarray}\label{FrobeniusMatrix}
\left (\begin{array}{cccc}
\lambda & 0 & 0 & 0 \\
0 & \lambda & 0 & 0\\
0 & 0 & \lambda & 0\\
0 & 0 & 0 & \lambda \\
\end{array}
\right )\bmod \ell^m
\end{eqnarray}
is $v_{\ell,\mathcal{O}}(\pi)$, where $\mathcal{O}$ is the endomorphism ring of $J$.
\end{lemma}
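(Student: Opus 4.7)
The plan is to show that the two conditions ``Frobenius acts as a scalar modulo $\ell^m$ on the Tate module'' and ``$\pi \in \Z + \ell^m \mathcal{O}$'' are equivalent, whence the lemma follows immediately from the definition of $v_{\ell,\mathcal{O}}$.

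First I would prove the easy direction. Suppose $\pi \in \Z + \ell^m \mathcal{O}$, so that $\pi = \lambda + \ell^m \beta$ for some $\lambda \in \Z$ and $\beta \in \mathcal{O} = \End(J)$. The action of an endomorphism on the $\ell$-Tate module $T_\ell J$ is $\Z_\ell$-linear, so the matrix of $\pi$ decomposes as $\lambda I_4 + \ell^m M_\beta$, where $M_\beta \in M_4(\Z_\ell)$ represents $\beta$. Reducing modulo $\ell^m$ gives precisely the scalar form \eqref{FrobeniusMatrix}.

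Next I would prove the converse. Assume the Frobenius matrix on $T_\ell J$ reduces to $\lambda I_4 \bmod \ell^m$ for some $\lambda \in \Z$. Equivalently, $\pi - \lambda$ acts as zero on $T_\ell J / \ell^m T_\ell J \cong J[\ell^m]$. By Lemma~\ref{torsion}, there exists an endomorphism $\beta \in \End(J) = \mathcal{O}$ such that $\pi - \lambda = \ell^m \beta$, hence $\pi \in \Z + \ell^m \mathcal{O}$. Taking the supremum of such $m$ on both sides yields the claimed identity with $v_{\ell,\mathcal{O}}(\pi)$.

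The only subtlety, which is the closest thing to an obstacle, is making sure that the integer $\lambda$ appearing in the scalar form is well-defined (i.e., that a scalar matrix over $\Z_\ell / \ell^m$ which is the reduction of an endomorphism of $J$ can be represented by a rational integer rather than merely a $\ell$-adic integer). This is automatic: the characteristic polynomial of $\pi$ has integer coefficients, and any integer lift of $\lambda \bmod \ell^m$ works equally well since we are only asserting congruence modulo $\ell^m$; one may pick the canonical representative in $\{0,\dots,\ell^m-1\}$. With this minor point addressed, the equivalence is exact and the lemma follows.
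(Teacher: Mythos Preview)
Your proof is correct and follows essentially the same approach as the paper's own argument: both directions hinge on the equivalence between ``$\pi-\lambda$ kills $J[\ell^m]$'' and ``$\pi-\lambda\in\ell^m\mathcal{O}$'', with the nontrivial implication supplied by Lemma~\ref{torsion}. The only cosmetic difference is that the paper phrases everything via a fixed $\Z$-basis $1,\delta,\gamma,\eta$ of $\mathcal{O}$ and the formula $v_{\ell,\mathcal{O}}(\pi)=v_\ell(\gcd(a_2,a_3,a_4))$, whereas you work directly with the definition $v_{\ell,\mathcal{O}}(\pi)=\max\{m:\pi\in\Z+\ell^m\mathcal{O}\}$; these are equivalent by~\eqref{basis}.
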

\begin{proof}
Let $m$ be the largest integer such that the matrix of the Frobenius on $J[\ell^{m}]$ has the form given in Equation~\eqref{FrobeniusMatrix}. Let $\mathcal{O}$ be the endomorphism ring of $J$. We denote by $\{1,\delta,\gamma,\eta\}$ the $\Z$-basis of $\mathcal{O}$ and by $\pi=a_1+a_2\delta+a_3\gamma+a_4\eta$ the decomposition of $\pi$ over this basis.
It is obvious that $ m \geq v_{\ell}(\textrm{gcd}(a_2,a_3,a_4))$. For the converse, we note that $\pi-\lambda$ kills the $\ell^m$-torsion, hence we may write $\pi-\lambda=\ell^m\alpha$, with $\alpha \in \textrm{End}(J)$. We write down the decomposition of $\alpha$ over the basis $\{1, \delta,\gamma,\eta \}$ and conclude that  $\ell^m|\gcd{(a_2,a_3,a_4)}$. Hence $m\leq v_{\ell}(\textrm{gcd}(a_2,a_3,a_4)).$ We conclude that $m=v_{\ell}(\textrm{gcd}(a_2,a_3,a_4)),$ hence $m=v_{\ell,\mathcal{O}}(\pi)$ by~\eqref{basis}.
\end{proof}
Using Galois cohomology, Schmoyer~\cite{Schmoyer} computes the matrix of the Frobenius on the Tate module, up to a certain precision, if the self-pairings of the Tate pairing are degenerate. We use a similar approach and show that the precision up to which the Frobenius acts on the Tate module as a multiple of the identity is $2n-k_{\ell}$. Consequently, we recover information on the conductor of the endomorphism ring of $J$ by computing $k_{\ell}$. For $m\in \Z$, we will use a \textit{symplectic basis} of $J[\ell^m]$, i.e. a basis such that the matrix associated to the $\ell^m$-Weil pairing is
\begin{eqnarray}\label{symplectic}
\left (\begin{array}{cc}
0 & I \\
-I & 0\\
\end{array}
\right )\bmod \ell^m.
\end{eqnarray}

\begin{proposition}\label{PropPrinc}
Let $H$ be a hyperelliptic smooth irreducible curve defined over a finite field $\F_q$, and $J$ its jacobian. Suppose that the Frobenius endomorphism $\pi$ is such that $\pi-1$ is exactly divisible by $\ell^n$, for $\ell\geq 3$ prime. Then if $v_{\ell,\textrm{End}(J)}(\pi)<2n$, we have
\begin{eqnarray}
v_{\ell,\textrm{End}(J)}(\pi)=2n-k_{\ell}.
\end{eqnarray}
\end{proposition}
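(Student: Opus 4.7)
The plan is to couple Lemma~\ref{Matrix} with the Galois-cohomological description of the Tate pairing at the end of Section~\ref{TatePairing}. Set $m:=v_{\ell,\mathrm{End}(J)}(\pi)$. By Lemma~\ref{Matrix} the Frobenius acts on the $\ell$-adic Tate module as a scalar modulo $\ell^{m}$; since $\pi-1$ is exactly divisible by $\ell^{n}$ this scalar is $\equiv 1\bmod \ell^{n}$, so $n\le m$, and the hypothesis $m<2n$ is what makes $2n-m$ positive. By Lemma~\ref{torsion} we write $\pi-1=\ell^{n}\beta$ with $\beta\in\mathrm{End}(J)$, while by definition of $m$ we also have $\pi=c+\ell^{m}\alpha$ with $c\in\Z$ and $\alpha\in\mathrm{End}(J)\setminus(\Z+\ell\,\mathrm{End}(J))$. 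Comparing the two expressions and using $\ell^n\mathrm{End}(J)\cap\Z=\ell^n\Z$ yields $c=1+\ell^{n}c_{0}$ for some $c_{0}\in\Z$ and $\beta=c_{0}+\ell^{m-n}\alpha$.

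I would then substitute into $T_{\ell^{n}}(P,Q)=W_{\ell^{n}}((\pi-1)\bar P,Q)$, where $\bar P$ is any $\ell^{n}$-th preimage of $P$. Because $(\pi-1)\bar P=\ell^{n}\beta\bar P=\beta P$, bilinearity of the Weil pairing gives
\begin{eqnarray*}
T_{\ell^{n}}(P,Q)=W_{\ell^{n}}(\beta P,Q)=W_{\ell^{n}}(P,Q)^{c_{0}}\,W_{\ell^{n}}(\alpha P,Q)^{\ell^{m-n}}.
\end{eqnarray*}
For any $G\in\mathcal{W}$ and $P,Q\in G$, the isotropy of $G$ kills the first factor, leaving $T_{\ell^{n}}(P,Q)=W_{\ell^{n}}(\alpha P,Q)^{\ell^{m-n}}\in\mu_{\ell^{2n-m}}$. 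This proves the easy inequality $k_{\ell}\le 2n-m$.

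For the matching lower bound I need to exhibit $G$ and $P,Q\in G$ such that $W_{\ell^{n}}(\alpha P,Q)$ is a primitive $\ell^{n}$-th root of unity, equivalently such that $W_{\ell}(\bar\alpha x,y)\ne 1$ on $J[\ell]$ for the reductions $x=\ell^{n-1}P,\,y=\ell^{n-1}Q$ living in the Lagrangian $\ell^{n-1}G\subset J[\ell]$, where $\bar\alpha$ is the $\F_{\ell}$-linear action of $\alpha$ on $J[\ell]$. By the defining property of $\alpha$ and Lemma~\ref{torsion} (the kernel of the action of $\mathrm{End}(J)$ on $J[\ell]$ is $\ell\,\mathrm{End}(J)$), $\bar\alpha\notin\F_{\ell}\cdot\mathrm{Id}$. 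I would then invoke the fact that a non-scalar $\F_{\ell}$-linear operator on a $4$-dimensional symplectic space cannot preserve every Lagrangian plane: every non-zero vector lies in a Lagrangian and equals the intersection of all Lagrangians containing it, so an operator preserving every Lagrangian would preserve every line and be scalar. Hence some Lagrangian $L\subset J[\ell]$ admits $x,y\in L$ with $W_{\ell}(\bar\alpha x,y)\ne 1$. Lifting $L$ to some $G\in\mathcal{W}$ with $\ell^{n-1}G=L$ and lifting $x,y$ to $P,Q\in G$ produces the desired pair, so that $T_{\ell^{n}}(P,Q)$ has exact order $\ell^{2n-m}$ and $k_{\ell}\ge 2n-m$.

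The delicate step is the last one: one must verify that the reduction $J[\ell^{n}]\to J[\ell]$ by multiplication by $\ell^{n-1}$ induces a surjection from the set of rank-$2$ $W_{\ell^n}$-isotropic subgroups $\mathcal{W}$ onto the Lagrangian Grassmannian of $J[\ell]$, so that the cleanly constructed $L$ actually lifts to a member of $\mathcal{W}$. This is a standard consequence of the compatibility of $W_{\ell^{n}}$ and $W_{\ell}$ and the existence of symplectic bases of $J[\ell^{n}]$ reducing to symplectic bases of $J[\ell]$. The rest of the proof is algebraic bookkeeping with Lemmas~\ref{torsion} and~\ref{Matrix} and bilinearity of the Weil pairing.
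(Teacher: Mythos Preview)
Your proof is correct, and the ``easy'' inequality $k_{\ell}\le 2n-m$ is obtained exactly as in the paper: both of you plug the decomposition of $\pi-1$ in $\mathrm{End}(J)$ into $T_{\ell^n}(P,Q)=W_{\ell^n}((\pi-1)\bar P,Q)$ and use isotropy to kill the scalar term.

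The proof of the reverse inequality $k_{\ell}\ge 2n-m$ is genuinely different from the paper's. The paper works on $J[\ell^{2n}]$: it fixes a symplectic basis, expresses each $T_{\ell^n}(\ell^nQ_i,\ell^nQ_j)$ as a product of Weil pairings weighted by the Frobenius matrix entries $a_{h,g}$, and reads off $a_{-i,j}\equiv 0\pmod{\ell^{2n-k_\ell}}$ for $i\neq -j$. It then uses the antisymmetry Lemma~\ref{antisymmetry} and the Galois invariance $W_{\ell^{2n}}(\pi Q_i,\pi Q_{-i})=W_{\ell^{2n}}(Q_i,Q_{-i})^{q}$ to force all diagonal entries $a_{i,i}$ to be congruent modulo $\ell^{2n-k_\ell}$, and finally invokes Lemma~\ref{Matrix}. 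You instead argue directly on $J[\ell]$: since $\alpha\notin\Z+\ell\,\mathrm{End}(J)$, its reduction $\bar\alpha$ is non-scalar, and a non-scalar operator on a $4$-dimensional symplectic $\F_\ell$-space cannot stabilise every Lagrangian (your intersection-of-Lagrangians argument is valid). Lifting a non-stable Lagrangian through the surjection $\mathrm{Sp}_4(\Z/\ell^n\Z)\twoheadrightarrow\mathrm{Sp}_4(\F_\ell)$ produces the witness $G\in\mathcal W$.

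Your route is shorter and avoids both Lemma~\ref{antisymmetry} and the Galois-invariance computation on $W_{\ell^{2n}}$. The paper's route, on the other hand, yields as a by-product the explicit congruences $a_{-i,j}\equiv 0\pmod{\ell^{2n-k_\ell}}$ on the Frobenius matrix entries, which are exactly what Algorithm~\ref{LocallyMaximal} exploits to compute $k_\ell$ from the pairing values $T_{\ell^n}(Q_i,Q_j)$; that algorithmic payoff is not visible from your argument.
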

\begin{proof}
Let $\{Q_{1},Q_2,Q_{-1},Q_{-2}\}$ a symplectic basis for the $\ell^{2n}$-torsion (whose matrix is given by Equation~\eqref{symplectic}) and let $\pi(Q_g)=\sum_{h=-2}^{2}a_{h,g}Q_h$, with $(a_{h,g})_{h,g\in \{-2,-1,1,2\}}$ in $\Z$.
By bilinearity, we have that
\begin{eqnarray*}
T_{\ell^n}(\ell^n Q_i,\ell^n Q_j)&=&W_{\ell^{2n}}(Q_i,\pi(Q_j)-Q_j)=W_{\ell^{2n}}(Q_i,\sum _{\substack{h=-2\\h\neq 0}}^{2}a_{h,j}Q_h-Q_j)\\&&=W_{\ell^{2n}}(Q_i,Q_j)^{a_{j,j}-1}\prod _{\substack{h=-2\\ h \neq 0,j}}^2 W_{\ell^{2n}}(Q_i,Q_h)^{a_{h,j}}.
\end{eqnarray*}
If $j \neq -i$, we have that $T_{\ell^n}(\ell^n Q_i,\ell^n Q_j)\in \mu_{\ell^{k_{\ell}}}$. It follows that
\begin{eqnarray}\label{jale}
a_{-i,j}\equiv 0 \pmod {\ell^{2n-k_{\ell}}},
\end{eqnarray}
for $i\neq -j$. If $j=-i$, then $T_{\ell^n}(\ell^n Q_i,\ell^n Q_j)=W_{\ell^{2n}}(Q_i,Q_j)^{a_{j,j}-1}$. Since the Tate pairing is $k_{\ell}$-antisymmetric we get
\begin{eqnarray*}
a_{i,i}&\equiv &a_{-i,-i}\pmod {\ell^{2n-k_{\ell}}}.
\end{eqnarray*}
It remains to prove that $a_{i,i}\equiv a_{j,j}$ , for $i,j\in \{-2,-1,1,2\}$.
Note that by Galois invariance, we have $W_{\ell^{2n}}(\pi(Q_i),\pi(Q_j))=\pi(W_{\ell^{2n}}(Q_i,Q_j))=W_{\ell^{2n}}(Q_i,Q_j)^q$.
For $i=-j$ we have
\begin{eqnarray*}
&&W_{\ell^{2n}}(\pi(Q_i),\pi(Q_{-i}))=W_{\ell^{2n}}(\sum_{\substack{h=-2\\h\neq 0}}^{2} a_{h,i}Q_h,\sum _{\substack{g=-2\\g\neq 0}}^2 a_{g,-i}Q_{g})\\&=& \prod _{\substack{h=-2\\h\neq 0}}^2 \prod_{\substack{g=-2\\g\neq 0}}^2W_{\ell^{2n}}(a_{h,i}Q_h,a_{g,-i}Q_{g})=W_{\ell^{2n}}(Q_i,Q_{-i})^{a_{i,i}a_{-i,-i}}\prod _{\substack{h=-2\\h\neq 0,i}}^2 W_{\ell^{2n}}(a_{h,i}Q_h, a_{-i,-i}Q_{-i})\\&& \cdot \prod _{\substack{g=-2\\g\neq 0,-i}}^2 W_{\ell^{2n}}(a_{i,i}Q_i, a_{g,-i}Q_{g})\prod _{\substack{s=-2\\s\neq 0,i}}^2\prod _{\substack{t=-2\\t\neq 0,-i}}^2W_{\ell^{2n}}(Q_s,Q_{t})^{a_{s,i}a_{t,-i}}
\end{eqnarray*}
Since $\{Q_{1},Q_{2},Q_{-1},Q_{-2}\}$ is a symplectic basis and that $a_{h,g}\equiv 0 \pmod {\ell^n}$, for $h\neq -g$, then
\begin{eqnarray*}
W_{\ell^{2n}}(\pi(Q_i),\pi(Q_{-i}))=W_{\ell^{2n}}^{a_{i,i}a_{-i,-i}}(Q_i,Q_{-i}).
\end{eqnarray*}
Since $a_{i,i}\equiv a_{-i,-i} \pmod {\ell^{2n-k_{\ell}}}$, it follows that
$$a_{i,i}^2\equiv q~\textrm{for all}~i\in \{-2,-1,1,2\}.$$
Since $a_{i,i}\equiv 1 \pmod {\ell^n}$, it follows that $a_{i,i}\equiv b \pmod {\ell^{2n-k_{\ell}}}$, for some $b\in \Z$. By Lemma~\ref{Matrix},
we have $2n-k_{\ell}\leq v_{\ell,\textrm{End}J}(\pi)$.
For the converse, let $k=2n-v_{\ell,\textrm{End}J}(\pi)$ and $R,S$ be two points in $J[\ell^n]$ such that $W_{\ell}(R,S)=1$. It suffices to show that $T_{\ell^n}(R,S)$ is $k$-degenerate.
We write $\pi-1=a_1+a_2\alpha+a_3\beta+a_4\theta$, where $1,\alpha,\beta,\theta$ form a $\Z$-basis of $\textrm{End}(J)$. We take $\bar{S}$ such that $S=\ell^n\bar{S}$ and we get
\begin{eqnarray*}
&&T_{\ell^n}(R,S)=W_{\ell^{n}}(R,(\pi-1)(\bar{S}))=\\&&= W_{\ell^n}(R,S)^{\frac{a_1}{\ell^n}}W_{\ell^{n}}(R,(\frac{a_2}{\ell^{2n-k}}\delta+\frac{a_3}{\ell^{2n-k}}\gamma+\frac{a_4}{\ell^{2n-k}}\eta)(S))^{\ell^{n-k}}.
\end{eqnarray*}
Since $W_{\ell}(R,S)=1$ and $v_{\ell}(\gcd(a_2,a_3,a_4))=\ell^{2n-k}$, we have $T_{\ell^n}(R,S)\in \mu_{\ell^k}$. Hence $k\geq k_{\ell}$. This concludes the proof.
\end{proof}
\noindent
Proposition~\ref{PropPrinc} gives a method to compute to compute $v_{\ell,\textrm{End}J}(\pi)$ using pairings. Together with Lemma~\ref{CharacterizationMaximalOrder}, this gives a criterion to check whether the endomorphism ring of a jacobian is locally maximal at $\ell$.  

\begin{theorem}\label{MainResult}
Let $H$ be a smooth irreducible genus 2 curve defined over a finite field $\F_q$ and $J$ its jacobian. Suppose that the Frobenius endomorphism $\pi$ is exactly divisible by $\ell^n$, $n\in \Z$ and that the conditions in Lemma~\ref{CharacterizationMaximalOrder} are satisfied. Then if $v_{\ell,\mathcal{O}_K}(\pi)<2n$, $\textrm{End}(J)$ is a locally maximal order at $\ell$ if and only if $k_{\ell}$ equals $2n-v_{\ell,\mathcal{O}_K}(\pi)$.
\end{theorem}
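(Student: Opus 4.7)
The plan is to derive Theorem~\ref{MainResult} by assembling the two preceding results: Proposition~\ref{PropPrinc}, which expresses the algebraic invariant $v_{\ell,\textrm{End}(J)}(\pi)$ as the computable pairing quantity $2n-k_{\ell}$, and Lemma~\ref{CharacterizationMaximalOrder}, which guarantees that this invariant strictly decreases as soon as $\textrm{End}(J)$ ceases to be locally maximal at $\ell$. A preliminary observation I will use is that, by formula~\eqref{basis}, the value $v_{\ell,\mathcal{O}}(\pi)$ depends on $\mathcal{O}$ only through its $\ell$-localization, so whenever $\mathcal{O}$ is locally maximal at $\ell$ one has $v_{\ell,\mathcal{O}}(\pi)=v_{\ell,\mathcal{O}_K}(\pi)$.

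For the forward implication, I would assume that $\textrm{End}(J)$ is locally maximal at $\ell$, so that the remark above gives $v_{\ell,\textrm{End}(J)}(\pi)=v_{\ell,\mathcal{O}_K}(\pi)$. The standing hypothesis $v_{\ell,\mathcal{O}_K}(\pi)<2n$ then allows me to invoke Proposition~\ref{PropPrinc}, which immediately yields $k_{\ell}=2n-v_{\ell,\mathcal{O}_K}(\pi)$.

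For the converse, I would argue by contraposition: assume $\ell\mid[\mathcal{O}_K:\textrm{End}(J)]$. The numerical conditions on the basis coefficients of $\pi$ are imposed in the statement of the theorem, while $v_{\ell,\mathcal{O}_K}(\pi)\geq n\geq 1>0$ and $\ell\nmid N_{K/K_0}(\pi)=q$ follow from the hypothesis that $\pi-1$ is exactly divisible by $\ell^n$ together with $\ell\neq p$. Hence all the hypotheses of Lemma~\ref{CharacterizationMaximalOrder} apply to $\mathcal{O}=\textrm{End}(J)$ and yield the strict inequality $v_{\ell,\textrm{End}(J)}(\pi)<v_{\ell,\mathcal{O}_K}(\pi)<2n$. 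In particular Proposition~\ref{PropPrinc} is available once more and gives $2n-k_{\ell}=v_{\ell,\textrm{End}(J)}(\pi)<v_{\ell,\mathcal{O}_K}(\pi)$, so $k_{\ell}>2n-v_{\ell,\mathcal{O}_K}(\pi)$, which rules out the equality of the hypothesis and completes the contrapositive.

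The real work of the theorem is done in Lemma~\ref{CharacterizationMaximalOrder} and Proposition~\ref{PropPrinc}; this final step is essentially logical bookkeeping. The only point that requires some care is verifying that the hypotheses of the two black boxes are genuinely available. In particular, one must check that $v_{\ell,\textrm{End}(J)}(\pi)<2n$ whenever $v_{\ell,\mathcal{O}_K}(\pi)<2n$, which holds because $v_{\ell,\textrm{End}(J)}(\pi)\leq v_{\ell,\mathcal{O}_K}(\pi)$: any representation $\pi=c+\ell^{m}\alpha$ with $c\in\Z$ and $\alpha\in\textrm{End}(J)\subseteq\mathcal{O}_K$ is \emph{a fortiori} such a representation with $\alpha\in\mathcal{O}_K$. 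No other obstacle is anticipated.
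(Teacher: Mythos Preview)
Your proposal is correct and follows exactly the paper's approach: combine Proposition~\ref{PropPrinc} (which identifies $k_\ell$ with $2n-v_{\ell,\textrm{End}(J)}(\pi)$) with Lemma~\ref{CharacterizationMaximalOrder} (which forces $v_{\ell,\mathcal{O}}(\pi)<v_{\ell,\mathcal{O}_K}(\pi)$ whenever $\mathcal{O}$ is not locally maximal at $\ell$). The paper's own proof is a two-line sketch saying precisely this; your write-up simply fills in the verifications (hypotheses of the lemma, the inequality $v_{\ell,\textrm{End}(J)}(\pi)\le v_{\ell,\mathcal{O}_K}(\pi)$, and the localization remark) that the paper leaves implicit.
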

\begin{proof}
By Proposition~\ref{PropPrinc}, $k_{\ell}$ equals $2n-v_{\ell^n,\mathcal{O}}(\pi)$, where $\mathcal{O}\simeq \textrm{End}(J)$. By Lemma~\ref{CharacterizationMaximalOrder}, the value of $v_{\ell^n,\mathcal{O}_K}(\pi)$ uniquely characterizes orders which are locally maximal at $\ell$.
\end{proof}

\begin{remark}
Let $\pi=1+a_1+a_2\delta+a_3\gamma+a_4\eta$ be the decomposition of the
Frobenius over a $\Z$-basis of $\mathcal{O}_K$. We deduce that $k_{\ell}>0$
if and only if $v_{\ell}(\gcd(a_2,a_3,a_4))<2v_{\ell}(\gcd{(a_1,a_2,a_3,a_4}))$.
\end{remark}

\noindent
We conclude this section by giving in Algorithm~\ref{LocallyMaximal} a computational method which verifies whether the jacobian $J$ of a genus 2 curve has locally maximal endomorphism ring. If $k_{\ell}=0$, the algorithm aborts. By Lemma~\ref{Matrix}, computing $k_{\ell}$ is equivalent to computing the greatest power of $\ell$ dividing all coefficients $a_{i,j}$, with $i\neq j$ of the matrix of the Frobenius on the Tate module. Equation~\ref{jale} shows that in order to compute the $\ell$-adic valuation of these coefficients, it suffices to determine all the values $T_{\ell^n}(Q_i,Q_j)$, for $i\neq j$.

\begin{algorithm}[h!]
\caption{Checking whether the endomorphism ring is locally maximal}\label{LocallyMaximal}
\begin{algorithmic}[1]
\REQUIRE A jacobian $J$ of a genus 2 curve defined over $\F_q$ such that
$J[\ell^n]\subset J(\F_q)$, the Frobenius $\pi$, a symplectic basis $(Q_1,Q_{2},Q_{-1},Q_{-2})$ for $J[\ell^n]$
\ENSURE The algorithm outputs true if $\textrm{End}(J)$ is maximal at $\ell$.
\FORALL{$i,j \in \{1,2,-1,-2\}$}
\IF {$i\neq -j$}
\STATE Compute $t_{i,j}\leftarrow T_{\ell^{n}}(Q_i,Q_j),$
\ELSE \STATE $t_{i,j}\leftarrow T_{\ell^n}(Q_i,Q_j)T_{\ell^n}(Q_j,Q_i)$
\ENDIF
\ENDFOR
\STATE Let $\mbox{Count}\leftarrow 0$ and $\mbox{check}\leftarrow -1$. 
\WHILE {$\mbox{check}\neq \mbox{Count}$}
\STATE $\mbox{check}\leftarrow \mbox{Count}$
%\STATE Let $a'=a$, $b'=b$ and $c'=c$
\FORALL {$i,j \in \{1,2,-1,-2\}$}
\IF {$t_{i,j}\neq 1$}
\STATE Let $t_{i,j}=t_{i,j}^\ell$
\STATE $\mbox{check}\leftarrow -1$
\ENDIF
\ENDFOR
\IF {$\mbox{check}\neq \mbox{Count}$}
\STATE $\mbox{Count}=\mbox{Count}+1$
\ENDIF
\ENDWHILE
\STATE $k_{\ell}\leftarrow n-\mbox{Count}$
\IF {$\mbox{Count}=0$} \STATE \textbf{abort}
\ENDIF
\IF {$k_{\ell}=2n-v_{\ell,\mathcal{O}_K}(\pi)$} \STATE \textbf{return} true
\ELSE \STATE \textbf{return} false
\ENDIF
\end{algorithmic}
\end{algorithm}
%Explain abort
\section{Application to horizontal isogeny computation}\label{HorizontalIsogenies}

In this section, we are interested in computing \textit{horizontal} isogenies, i.e. isogenies between Jacobians having the same endomorphism ring. Note that if $I:J_1\rightarrow J_2$ is an isogeny such that $J_1$ has maximal endomorphism ring at $\ell$, we distinguish two cases: either $\textrm{End}(J_2)$ is locally maximal at $\ell$, or $\textrm{End}(J_2)\subset \textrm{End}(J_1)$. In the last case we say that the isogeny is \textit{descending}.

Over the complex numbers, horizontal isogenies are given in terms of the action of the Shimura class group~\cite{Shimura}.
Let $\Phi$ be a CM-type and let $A$ be an abelian surface over $\C$ with complex multiplication by $\mathcal{O}_K$, given by $A=\C^2/\Phi(I^{-1})$, where $I$ is an ideal of $\mathcal{O}_K$. The surface is principally polarized if there is a purely imaginary $\xi \in \mathcal{O}_K$ with $\textrm{Im}(\Phi_i(\xi))>0$, for $i\in \{1,2\}$, and such that $\xi \mathfrak{D}_K=I\bar{I}$ (where $\mathfrak{D}_K$ is the different $\{\alpha \in \mathcal{O}_K: \mathrm{Tr}_{K/\Q}(\alpha \mathcal{O}_K)\subset \Z\}$).
Computing horizontal isogenies is usually done by using the action of the Shimura class group~\cite{Shimura}.
This group, that we denote by $\textgoth{C}(K)$, is defined as
$$\{(\textfrak{a},\alpha)|\mathfrak{a}~\textrm{is a fractional}~\mathcal{O}_K\mbox{-}\textrm{ideal with}~\mathfrak{a}\mathfrak{\bar{a}}=(\alpha)~\textrm{with}~\alpha\in K_0~\textrm{totally positive}\}/\sim,$$ where $(\mathfrak{a},\alpha)\sim (\mathfrak{b},\beta)$ if and only if there exists $u\in K^{*}$ with $\mathfrak{b}=u\mathfrak{a}$ and $\beta=u\bar{u}\alpha$.
The action of $(\mathfrak{a},\alpha)\in \textgoth{C}(K)$ on an principally polarized abelian surface given by $(I,\xi)$ is given by the ideal
$(\mathfrak{a}I,\alpha\xi)$. This action is transitive and free~\cite[\textsection 14.6]{Shimura}.

If the norm of $\mathfrak{a}$ is coprime to the discriminant of
$\Z[\pi,\bar{\pi}]$, the kernel of the horizontal isogeny corresponding to
$\mathfrak{a}$ is a subgroup of the $\ell$-torsion invariant under the
Frobenius endomorphism. Hence in order to compute the kernel, we need to
compute the matrix of the Frobenius for some basis of the $\ell$-torsion
and then determine subspaces which are invariant by this matrix
(see~\cite[Algorithm VI.3.4]{Bisson}). We show that, when a Jacobian with
locally maximal order at $\ell$ is given, kernels of
$(\ell,\ell)$-horizontal isogenies are subgroups on which the Tate pairing
is degenerate. This result holds for any $\ell>2$ and is independent of the
value of the discriminant of $\Z[\pi,\bar{\pi}]$. The resulting algorithm,
whose complexity is analysed in Section~\ref{Complexity}, computes kernels
of horizontal isogenies with only a few pairing computations.

We state the following lemma for jacobians of genus 2 curves over finite
fields, which are the framework for this paper. We note that the result
holds for abelian varieties in general. 

\begin{lemma}\label{IsogenyPairing}
\begin{itemize}
\item[(a)] Let $J_1,J_2$ be jacobians of genus 2 smooth irreducible curves defined over a finite field $\F_q$ and $I:J_1\rightarrow J_2$ an isogeny defined over $\F_q$ which splits multiplication by $d$. Let $\lambda: J_1\rightarrow \hat{J}_1$ be a principal polarization. Then for $P\in J_1(K)$, $Q\in J_1[m](K)$ we have
\begin{eqnarray*}
T_m^{\lambda_I}(I(P),I(Q))=T_m^{\lambda}(P,Q)^d,
\end{eqnarray*}
where $\lambda_I: J_2\rightarrow \hat{J_2}$ is the principal polarization such that $I\circ \lambda_I \circ \check{I}=d\circ \lambda$.
%for a fixed polarization $\lambda: A\rightarrow \hat{A}$.
\item[(b)]  Let $J_1,J_2$ be jacobians of genus 2 smooth irreducible curves defined over $\F_q$ and $I:J_1\rightarrow J_2$ an isogeny defined over $\F_q$ which splits multiplication by $m$. Let $P\in J_1(K)$, $Q\in J_1[mm'](K)$ such that $I(Q)$ is a $m'$-torsion point.
\begin{eqnarray*}
T_{m'}^{\lambda_I}(I(P),I(Q))=T^{\lambda}_{mm'}(P,Q)^m,
\end{eqnarray*}
where $\lambda_I$ is a principal polarization of $J_2$ such that $I\circ \lambda_I\circ \check{I}=m\circ \lambda$.
\end{itemize}
\end{lemma}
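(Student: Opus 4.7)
The plan is to prove both parts by exploiting the compatibility of the Tate pairing with an isogeny via its dual: for any isogeny $I:J_1\to J_2$ defined over $\F_q$ and any $P\in J_1(\F_q)$, $S\in \hat J_2[m](\F_q)$, one has
\begin{equation*}
t_m(I(P),S)=t_m(P,\check I(S)),
\end{equation*}
and the same identity descends to the reduced pairing $T_m$. This is immediate from the cohomological description of $t_m$ recalled in Section~\ref{TatePairing}: if $m\bar P=P$, then $I(\bar P)$ is an $m$-preimage of $I(P)$, so $F_{I(P)}(\sigma)=I(F_P(\sigma))$, and the Weil-pairing functoriality $W_m(I(R),S)=W_m(R,\check I(S))$ concludes.

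For part~(a), I would apply the pullback identity with $S=\lambda_I(I(Q))$. The hypothesis that $I$ splits multiplication by $d$ is captured by the polarization identity $\check I\circ\lambda_I\circ I=d\lambda$ on $J_1$ (the pullback of $\lambda_I$ by $I$ equals $d$ times the given polarization). Hence $\check I(\lambda_I(I(Q)))=d\lambda(Q)$, and bilinearity of $T_m$ gives
\begin{equation*}
T_m^{\lambda_I}(I(P),I(Q))=T_m(P,d\lambda(Q))=T_m(P,\lambda(Q))^d=T_m^\lambda(P,Q)^d.
\end{equation*}

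For part~(b), the same manipulation at level $m'$ yields $T_{m'}^{\lambda_I}(I(P),I(Q))=T_{m'}(P,m\lambda(Q))$, where now $\lambda(Q)\in\hat J_1[mm']$ and $m\lambda(Q)\in\hat J_1[m']$. I would then invoke the standard level-change identity for the reduced Tate pairing,
\begin{equation*}
T_{m'}(P,[m]R)=T_{mm'}(P,R)^m\quad\text{for }R\in\hat J_1[mm'],
\end{equation*}
applied with $R=\lambda(Q)$, to conclude $T_{m'}^{\lambda_I}(I(P),I(Q))=T_{mm'}^\lambda(P,Q)^m$.

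The argument has no substantial obstacle: it is a two-step translation, first through the dual isogeny and then, in~(b), through the level-change formula for the Tate pairing. The two inputs needed are the Weil-pairing functoriality (which gives the pullback identity for $t_m$) and the Miller-function relation $f_{mm',R}=f_{m',[m]R}\cdot f_{m,R}^{m'}$, up to a nowhere-vanishing constant, underlying the level-change identity; I would cite both from the standard references rather than reprove them. The polarization relation $\check I\circ\lambda_I\circ I=d\lambda$ should be stated explicitly at the outset as the precise meaning of the hypothesis that $I$ splits multiplication by $d$ in the polarized setting.
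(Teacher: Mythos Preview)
Your proof of part~(a) is correct and is exactly the paper's argument: use $F_{I(P)}=I\circ F_P$ on the cohomological side, then Weil-pairing functoriality with respect to $\check I$ (the paper cites this as \cite[Prop.~13.2.b]{Milne}), and finally the polarization identity $\check I\circ\lambda_I\circ I=d\lambda$.

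For part~(b) both arguments are correct, but the order of the two steps is reversed. The paper applies part~(a) at level $mm'$ to obtain $T_{mm'}^{\lambda_I}(I(P),I(Q))=T_{mm'}^{\lambda}(P,Q)^m$, and then uses the identity $T_{mm'}(I(P),I(Q))=T_{m'}(I(P),I(Q))$ on $J_2$, valid because $I(Q)\in J_2[m']$. You instead apply the pullback identity at level $m'$ on $J_2$ and then perform the level change $T_{m'}(P,m\lambda(Q))=T_{mm'}(P,\lambda(Q))^m$ on $J_1$. These are equivalent one-line reductions via the same Weil-pairing compatibilities, so there is no substantive difference; the paper's route is marginally shorter since it avoids stating the Miller-function relation explicitly.
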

\begin{proof}
(a) It is easy to check that $\delta(I(P))=I(\delta(P))$. Hence for $\sigma \in G_K$ we have
$$W_m(F_{I(P)}(\sigma),I(Q))=W_m(I(F_P(\sigma)),I(Q)).$$
By using~\cite[Proposition 13.2.b]{Milne}
$$W_m^{\lambda_I}(I(F_P(\sigma)),I(Q))=W_m^{\check{I}\circ \lambda_I\circ I}(F_P(\sigma),Q).$$
(b) The proof is immediate by using (a) and the fact that $T_{mm'}(I(P),I(Q))=T_{m'}(I(P),I(Q))$.
\end{proof}

\begin{lemma}\label{divisors}
Let $H/\F_q$ be a smooth irreducible curve and $D_1,D_2$ are two elements of $J(\F_q)$ of order
$\ell^n$, $n\geq 1$. Let $\bar{D}_1,\bar{D}_2\in J(\F_q)$ such that $\ell \bar{D}_1=D_1$ and $\ell \bar{D}_2=D_2$. Then we have
\begin{itemize}
\item[(a)] If $\bar{D}_1,\bar{D}_2\in J(\F_q)$, then
$$T_{\ell^{n+1}}(\bar{D}_1,\bar{D}_2)^{\ell^2}=T_{\ell^n}(D_1,D_2).$$
\item[(b)] Suppose $\ell\geq 3$. If $\bar{D}_1\in J(\bar{\F}_q)\backslash J(\F_q)$, then
$$T_{\ell^{n+1}}(\bar{D}_1,\bar{D}_2)^{\ell}=T_{\ell^n}(D_1,D_2).$$
\end{itemize}
\end{lemma}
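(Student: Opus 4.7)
The plan is to use the cohomological formulation $T_m(P,Q)=W_m(F_P(\sigma),Q)$ of the reduced Tate pairing, choosing a coherent $\ell^{n+1}$-th preimage that simultaneously feeds both pairings. Fix $\tilde D\in J(\bar\F_q)$ with $\ell^{n+1}\tilde D=\bar D_1$ and set $X:=\pi(\tilde D)-\tilde D$, where $\pi$ is the $\F_q$-Frobenius. Because $\ell^n(\ell^2\tilde D)=\ell\bar D_1=D_1$, the point $\ell^2\tilde D$ is an $\ell^n$-th preimage of $D_1$, so $F_{D_1}(\pi)=\ell^2 X$. Silverman's Weil-pairing compatibility $W_{mm'}(P,Q)=W_m(m'P,Q)$ with $m=\ell^n$, $m'=\ell^2$, combined with bilinearity on $D_2=\ell\bar D_2$, rewrites
\[
T_{\ell^n}(D_1,D_2)=W_{\ell^n}(\ell^2 X,D_2)=W_{\ell^{n+2}}(X,\bar D_2)^{\ell}=:\zeta^\ell,
\]
with $\zeta:=W_{\ell^{n+2}}(X,\bar D_2)\in\mu_{\ell^{n+1}}$, since bilinearity in the second slot kills $\ell^{n+1}\bar D_2=0$.

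For part (a), $\bar D_1\in J(\F_q)$ forces $X\in J[\ell^{n+1}]$, and one more application of the compatibility gives $\zeta=W_{\ell^{n+1}}(X,\bar D_2)^\ell=T_{\ell^{n+1}}(\bar D_1,\bar D_2)^\ell$, so $T_{\ell^n}(D_1,D_2)=T_{\ell^{n+1}}(\bar D_1,\bar D_2)^{\ell^2}$. For part (b), the Frobenius relevant to $T_{\ell^{n+1}}$ is $\pi^\ell$, and a telescoping identity yields $F_{\bar D_1}(\pi^\ell)=\sum_{i=0}^{\ell-1}\pi^i(X)$. The same compatibility converts $T_{\ell^{n+1}}(\bar D_1,\bar D_2)^\ell$ into $\prod_{i=0}^{\ell-1}W_{\ell^{n+2}}(\pi^i(X),\bar D_2)$. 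Galois equivariance of the Weil pairing, together with $Z:=\pi(\bar D_2)-\bar D_2\in J[\ell]\subset J(\F_q)$, rewrites each factor as $\zeta^{q^i}\eta^{-i}$, where $\eta:=W_{\ell^{n+2}}(X,Z)\in\mu_\ell$, and the product collapses to $\zeta^{(q^\ell-1)/(q-1)}\cdot\eta^{-\ell(\ell-1)/2}$.

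The main obstacle is the arithmetic verification of this final expression, and it is here that the hypothesis $\ell\geq 3$ enters twice. First, oddness of $\ell$ makes $(\ell-1)/2$ an integer, so $\eta^{-\ell(\ell-1)/2}=(\eta^\ell)^{-(\ell-1)/2}=1$ trivializes the $\eta$-factor. Second, writing $q-1=a\ell^n$ and expanding
\[
\sum_{i=0}^{\ell-1}(q^i-1)=(q-1)\sum_{j=0}^{\ell-2}(\ell-1-j)q^j,
\]
the inner sum reduces modulo $\ell$ to $\ell(\ell-1)/2\equiv 0\pmod\ell$ for odd $\ell$; hence $(q^\ell-1)/(q-1)-\ell$ is divisible by $\ell^{n+1}$, which kills the discrepancy against $\zeta\in\mu_{\ell^{n+1}}$ and reduces $T_{\ell^{n+1}}(\bar D_1,\bar D_2)^\ell$ to $\zeta^\ell=T_{\ell^n}(D_1,D_2)$, completing (b).
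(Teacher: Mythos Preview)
Your argument is correct, and it follows a genuinely different route from the paper's proof in Appendix~A. The paper works entirely with the Lichtenbaum description $T_m(P,Q)=f_{m,Q}(P)^{(q^k-1)/m}$: for~(a) it uses the Miller-function identity $f_{\ell^{n+1},\bar D_2}=f_{\ell,\bar D_2}^{\ell^n}\cdot f_{\ell^n,D_2}$ and evaluates at $D_1$; for~(b) it first reduces to showing that the $\ell^n$-Tate pairing of $(\bar D_1,D_2)$ over $\F_{q^\ell}$ agrees with the one over $\F_q$, and then proves this via the trace relation $\sum_{i}\pi^i(\bar D_1)\sim D_1$ together with Weil reciprocity applied to an auxiliary function with divisor $\sum_i(\pi^i\bar D_1)-D_1$.

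You instead stay with the cohomological formula $T_m(P,Q)=W_m\bigl((\pi-1)\bar P,\,Q\bigr)$ throughout, and the role of the Miller identity is taken over by the level-compatibility $W_{mm'}(P,Q)=W_m(m'P,Q)$ of the Weil pairing. For~(b), rather than invoking Weil reciprocity, you expand $(\pi^\ell-1)\tilde D$ as a telescoping Galois sum and control the two error terms by hand: the $\eta$-contribution vanishes because $\ell\mid \ell(\ell-1)/2$ for odd~$\ell$, and the exponent $(q^\ell-1)/(q-1)$ differs from~$\ell$ by a multiple of $\ell^{n+1}$ by the same parity argument combined with $\ell^n\mid q-1$. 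This is slightly more bookkeeping than the paper's Weil-reciprocity step, but it has the advantage of making the proof uniform with the Galois-cohomology viewpoint used in Section~\ref{MaximalOrders}, and it avoids any explicit function-field manipulation. Both arguments rely implicitly on the ambient hypothesis $J[\ell]\subset J(\F_q)$ (so that $Z$ and $\ell^{n+1}X$ are $\pi$-fixed), which is in force throughout the paper.
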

\begin{proof}
The proof is similar to to the one of~\cite[Lemma 4.6]{IonJou}. For completeness, we detail it in Appendice~\ref{Annexe1}. 
\end{proof}

\noindent
We may now prove Theorem~\ref{horizontal}.

\vspace{0.2 cm}

\noindent
\textit{Proof of Theorem 1.}  We assume that $k_{\ell}\geq 2$. Otherwise,
we use Lemma~\ref{divisors}  and work over an extension field of $\F_q$. We
denote by $I:J\rightarrow  J'$ the isogeny of kernel $G$. Let $k'_{\ell}$ be the $k_{\ell}$ corresponding to $J'$.\\
1) Suppose that $\bar{G}$ is such that the Tate pairing is non-degenerate over $\bar{G}\times \bar{G}$. Then by applying Lemma~\ref{IsogenyPairing} we have
$$T_{\ell^{n-1}}(I(P_1),I(P_2))\in \mu_{\ell^{k_{\ell}-1}}\backslash \mu_{\ell^{k_{\ell}-2}},$$
for $P_1,P_2\in \bar{G}$. If $J'[\ell^n]$ is not defined over $\F_q$, then its endomorphism ring cannot be maximal at $\ell$, hence the isogeny is descending. Assume then that $J'[\ell^n]$ is defined over $\F_q$.
Let $\bar{P}_1,\bar{P}_2\in J'[\ell^n]$ be such that $I(P_1)=\ell \bar{P}_1$, $I(P_2)=\ell \bar{P}_2$. Then $T_{\ell^n}(\bar{P}_1,\bar{P}_2)\in \mu_{\ell^{k_{\ell}+1}}\backslash \mu_{\ell^{k_{\ell}}}$.
We denote by $G'=<\bar{P}_1,\bar{P}_2>$. The subgroup $G'$ may be chosen such that it is maximal isotropic with respect to the $\ell^n$-Weil pairing. It follows that $k_{\ell}'\geq k_{\ell}+1$. By Theorem~\ref{MainResult}, we deduce that the endomorphism ring of $J'$ is not locally maximal at $\ell$, hence the isogeny is descending.\\
2) Suppose now that the Tate pairing is degenerate over $\bar{G}\times \bar{G}$. We distinguish two cases.\\
\textit{Case 1.} Suppose that $J'[\ell^n]$ is defined over $\F_q$. With the same notations as above, we get that $T_{\ell^n}(\bar{P}_1,\bar{P}_2)\in \mu_{\ell^{k_{\ell}}}$. Let $L\subset J'[\ell^n]$ be a subgroup of rank 2 maximal isotropic with respect to the Weil pairing and consider $Q_1,Q_2\in L\backslash G'$. Then $\ell^{n-1} Q_1,\ell^{n-1}Q_2\in Ker~I^{\dagger}$. Since $T_{\ell^{n-1}}(I^{\dagger}(Q_1),I^{\dagger}(Q_2))\in \mu_{\ell^{k_{\ell}-2}}$, it follows that $T_{\ell^n}(Q_1,Q_2)\in \mu_{\ell^{k_{\ell}-1}}$. Hence $k_{\ell}'\leq k_{\ell}$. By Theorem~\ref{MainResult}, we conclude that the endomorphism ring of $J'$ is locally maximal at $\ell$. \\
\textit{Case 2.} Suppose that $J'[\ell^n]$ is not defined over $\F_q$. Hence $I$ is descending. We have
$$T_{\ell^{n-1}}(I(P_1),I(P_2))\in \mu_{\ell^{k_{\ell}-2}}.$$
 Let $L\subset J'[\ell^{n-1}]$ be a subgroup of rank 2 such that
 $\ell^{n-2}L$ is maximal isotropic with respect to the Weil pairing and
 consider $Q_1,Q_2\in L\backslash G'$. Then~$\ell^{n-2}
 Q_1,\ell^{n-2}Q_2\in Ker~I^{\dagger}$. Since
 $T_{\ell^{n-1}}(I^{\dagger}(Q_1),I^{\dagger}(Q_2))\in
 \mu_{\ell^{k_{\ell}-4}}$, it follows that $T_{\ell^{n-1}}(Q_1,Q_2)\in
 \mu_{\ell^{k_{\ell}-3}}$. Hence
 $v_{\ell,\textrm{End}J'}(\pi)=v_{\ell,\textrm{End}J}(\pi)$ which
 contradicts the hypothesis that $I$ is descending.\\

Let $G\in \mathcal{W}$. By an argument similar to the one in Lemma~\ref{antisymmetry}, in order to determine the largest integer $k$ such that $T_{\ell^n}:G\times G\rightarrow \mu_{\ell^k}$ is surjective, it suffices to determine the largest $k$ such that all the self-pairings $T_{\ell^n}(P,P)$, with $P\in G$, are $\ell^k$-th roots of unity.
Let $G$ and $G'$ in $\mathcal{W}$ such that $\ell^{n-1}G=\ell^{n-1}G'$. First note that $P'\in G'$ can be written as $P'=P+L$, with $P\in G$ and $L\in J[\ell^{n-1}]$. Then by bilinearity
\begin{eqnarray*}
T_{\ell^n}(P',P')=T_{\ell^n}(P,P)(T_{\ell^n}(P,L)T_{\ell^n}(L,P))T_{\ell^n}(L,L).
\end{eqnarray*}
By Lemma~\ref{antisymmetry} and given that $L\in J[\ell^{n-1}]$, we have that $T_{\ell^n}(P',P')$ is a $\ell^{k_{\ell}}$-th primitive root of unity if and only if $T_{\ell^n}(P,P)$ is a $\ell^{k_{\ell}}$-th primitive root of unity. This implies that in order to compute $k_{\ell}$ it suffices to compute pairings over a set of representatives of $\mathcal{W}$ modulo the equivalence relation $G\sim G'$ if and only if $\ell^{n-1}G=\ell^{n-1}G'$.

Consequently, in order to find all kernels of horizontal isogenies we search, among subgroups $G\in \mathcal{W}$ (modulo the $\ell^{n-1}$-torsion), those for which the Tate pairing restricted to $G\times G$ maps to $\mu_{\ell^{k_{\ell,J-1}}}$.
 If $\{Q_1,Q_{2},Q_{-1},Q_{-2}\}$ is a symplectic basis for $J[\ell^n]$, then a subgroup of rank 2 generated by $\lambda_1Q_1+\lambda_{-1}Q_{-1}+\lambda_2Q_2+\lambda_{-2}Q_{-2}$ and $\lambda'_1Q_1+\lambda'_{-1}Q_{-1}+\lambda'_2Q_2+\lambda'_{-2}Q_{-2}$, with $\lambda_i,\lambda'_j\in \F_{\ell}$, $i,j\in \{-2,-1,1,2\}$, is maximal isotropic with respect to the Weil pairing if the following equation is satisfied
\begin{eqnarray}\label{lula}
\lambda_1\lambda'_{-1}-\lambda_{-1}\lambda'_1+\lambda_2\lambda'_{-2}-\lambda_{-2}\lambda_2=0.
 \end{eqnarray}
Moreover, this subgroup has degenerate Tate pairing if the following equations are satisfied
\begin{eqnarray}
\sum_{i,j\in \{1,2,-1,-2\}} \lambda_i\lambda_j\log T_{\ell^n}(Q_i,Q_j)&=&0 \bmod \ell^{n-k_{\ell}+1}\\
\sum_{i,j\in \{1,2,-1,-2\}} \lambda_i\lambda'_j\log T_{\ell^n}(Q_i,Q_j)&=&0 \bmod \ell^{n-k_{\ell}+1}\\
\sum_{i,j\in \{1,2,-1,-2\}} \lambda'_i\lambda'_j\log T_{\ell^n}(Q_i,Q_j)&=&0 \bmod \ell^{n-k_{\ell}+1}
\end{eqnarray}

\begin{example}
We consider the jacobian of the hyperelliptic curve
\begin{eqnarray*}
 y^2=5x^5+4x^4+98x^2+7x+2,
\end{eqnarray*}
defined over the finite field $\F_{127}$. The jacobian has maximal endomorphism ring at $5$ and $[\textrm{End}J:\Z[\pi,\bar{\pi}]]=50$. The ideal $(5)$ decomposes as $5=\mathfrak{a_1}\mathfrak{a_2}$ in $\mathcal{O}_K$. Hence there are two horizontal isogenies, which correspond to ideals $\mathfrak{a_1}$ and $\mathfrak{a_2}$ under the Shimura class group action. The 5-torsion is defined over an extension field of degree 8 of the field $\F_{127}$, that we denote $\F_{127}(t)$. Our computations with MAGMA found two subgroups of $J[5]$, maximal isotropic with respect to the Weil pairing and with degenerate 5-Tate pairing. For lack of space, we give here the Mumford coordinates of the generators of one of these subgroups.

\begin{eqnarray*}
&&(x^2 + (74t^7 + 25t^6 + 6t^5 + 110t^4 + 96t^3 + 75t^2 + 29t + 20)x\\ && +
39t^7 + 62t^6 + 77t^5 + 47t^4 + 9t^3 + 62t^2 + 97t + 15,\\ &&(116t^7 +
61t^6 + 13t^5 + 38t^4 + 70t^3 + 109t^2 + 62t + 71)x + 98t^7\\&& + 77t^6
 + 17t^5 + 76t^4 + 81t^3 + 5t^2 + 36t + 33)\\
&& (x^2 + (66t^7 + 89t^6+ 50t^5 + 124t^4 + 91t^3 + 102t^2 + 100t + 52)x\\&& + 119t^7 + 14t^6+ 126t^5 + 42t^4 + 42t^3 + 85t^2 + 12t + 77,\\&& (92t^7 + 90t^6 + 94t^5 +57t^4 + 59t^3 + 24t^2 + 72t + 11)x\\&& + 103t^7 + 16t^6 + 7t^5 + 111t^4+95t^3+79t^2+45t+34)
\end{eqnarray*}

\end{example}

\section{Complexity analysis}\label{Complexity}
In this section, we evaluate the complexity of Algorithm~\ref{LocallyMaximal} and compare its performance to that of the Freeman-Lauter algorithm. Note that for a fixed $\ell>2$, both algorithms perform computations in extension fields over which the $\ell^d$-torsion, for a certain $\ell^d$ dividing $[\mathcal{O}_K:\Z[\pi,\bar{\pi}]]$, is rational.

\paragraph{Checking locally maximal endomorphism rings.}
 In Freeman and Lauter's algorithm, in order to check if $\End(J)$ is locally maximal at $\ell$, for $\ell>2$, it suffices to check that $\sqrt{d}$ and $\eta$  are endomorphisms of $J$ (see~\cite[Lemma 6]{EisLau}). If $\pi=c_1+c_2\sqrt{d}+(c_3+c_4\sqrt{d})\eta$\footnote{Note that we cannot always write $\pi$ in this form, but if this is not case, we can always replace $\pi$ by $2^s\pi$, for some $s\in \Z$.} then we have
\begin{eqnarray}
2c_2\sqrt{d}&=&\pi+\bar{\pi}-2c_1\\
(4c_2(c_3^2-c_4^2d))\eta &=&(2c_2c_3-c_4(\pi+\bar{\pi}-2c_1))(\pi-\bar{\pi}).
\end{eqnarray}
Moreover, Eisentr\"ager and Lauter show that the index is $[\mathcal{O}_K:\Z[\pi,\bar{\pi}]]=2^sc_2(c_3^2-c_4^2d)$, for some $s\in \N$. Hence, for a fixed $\ell>2$ dividing the index $[\mathcal{O}_K:\Z[\pi,\bar{\pi}]]$, we need to consider an extension field over which $J[\ell^u]$ is defined, where $u$ is the $\ell$-adic valuation of the index. Meanwhile, Algorithm~\ref{LocallyMaximal} performs computations over the smallest extension field containing the $\ell$-torsion points. The degree of this extension field is smaller than $\ell^3$, by Proposition~\ref{extensionField}.

\vspace{0.3 cm}
\noindent
\textit{Notation.}
We denote by $r$ the degree of the smallest extension field $\F_{q^r}$ such that the $\ell$-torsion is $\F_{q^r}$-rational.\\

\noindent
We suppose that $\pi^r-1$ is exactly divisible by $\ell^n$.  First, we need to compute a basis for the $\ell^n$-torsion. We assume that the zeta function of $J/\F_{q^r}$ and the factorization $\#J(\F_{q^r})=\ell^sm$ are known in advance. We denote by $M(r)$ the cost of multiplication in an extension field of degree $r$. In order to compute the generators of $J[\ell^n]$, we use an algorithm implemented in AVIsogenies~\cite{AVISOGENIES}, which needs $O(M(r)(r\log q+\ell^n))$ operations in $\F_q$. We then compute a symplectic basis of $J[\ell^n]$, by using an algorithm similar to Gram\mbox{--}Schmidt orthogonalization.
 In order to compute $k_{\ell}$, we use the values of the Tate pairing $T_{\ell^n}(Q_i,Q_j)$ for $i,j\in \{1,-1,2,-2\}$. Computing the Tate pairing costs $O(M(r)(n\log \ell+r\log q))$ operations in $\F_q$, where the first term is the cost of Miller's algorithm and the second one is the cost for the final exponentiation.
 We conclude that the cost of Algorithm~\ref{LocallyMaximal} is $O(M(r)(r\log q++\ell^n+n\log \ell))$. The complexity of Freeman and Lauter's algorithm for endomorphism ring computation is dominated by the cost of computing the $\ell$-Sylow group of the Jacobian defined over the extension field containing the $\ell^u$-torsion, whose degree is $r\ell^{u-r}$ (by Proposition~\ref{moreTorsion}). The costs of the two algorithms are given in Table~\ref{Table1}.

\begin{table}[ht]
\centering
\caption{ \label{Table1} Cost for checking locally maximal endomorphism rings at $\ell$}
\begin{tabular}{|c|c|}
%heading
\hline
 Freeman and Lauter  & This work (Algorithm~\ref{LocallyMaximal}) \\
\hline
\multirow{3}{*}{$O(M(r+\ell^{u-r})(r\ell^{u-r}\log q+\ell^u))$}&\multirow{3}{*}{$O(M(r)(r\log q+\ell^n+n\log \ell))$}\\
 &\\
 & \\
\hline
\end{tabular}
\end{table}

\paragraph{Computing horizontal isogenies.} Both classical algorithms and our algorithm need to compute first a basis for the $\ell$-torsion. As stated before, this costs $O(rM(r)\log q)$. The classical algorithm (see~\cite[Algorithm VI.3.4]{Bisson}) computes subspaces which are invariant under the action of Frobenius. More precisely, this algorithm needs to compute the matrix of the Frobenius endomorphism (in $O(\ell^2)$ operations in $\F_{q^r}$ using a baby-step giant-step approach). We conclude that the overall complexity of this algorithm is $O(M(r)(r\log q+\ell^2))$. The method described in Section~\ref{HorizontalIsogenies} computes a symplectic basis of the $\ell^n$-torsion and solves a system of 4 homogenous equations of degree $2$, with coefficients in $\F_{\ell}$. The cost of solving this system is polynomial in $\ell$ and thus negligible ($\ell$ is small). Our method for horizontal isogeny computation has the same cost as Algorithm~\ref{LocallyMaximal}.

\section{Conclusion}

For an ordinary jacobian defined over a finite field, we have described a
relation between its endomorphism ring and some properties of the
$\ell$-Tate pairing. We deduced an efficient criterion for checking whether
the jacobian is locally maximal at $\ell$ and an algorithm computing
kernels of horizontal $(\ell,\ell)$-isogenies.

\section{Acknowledgements} This work was supported by the Direction
G\'en\'erale de l'Armement through the AMIGA project under contract
2010.60.055 and by the French Agence Nationale de la Recherche through the
CHIC project. The author thanks David Gruenewald and John Boxall for helpful discussions and is particularly indebted to Ben Smith for valuable comments and proofreading of previous versions of this manuscript.
\bibliographystyle{plain}
\bibliography{sorina1}

\section{Appendix A}\label{Annexe1}
We detail the proof of Lemma~\ref{divisors}.
\begin{proof}
(a) We can easily check that
\begin{eqnarray*}
f_{\ell^{n+1},\bar{D}_2}=(f_{\ell,\bar{D}_2})^{\ell^n}\cdot f_{\ell^n,D_2}.
\end{eqnarray*}
Note that these functions are $\F_q$-rational.
By evaluating them at $D_1$ and raising to the power $(q-1)/\ell^n$, we obtain the desired equality.
(b) Since $\textrm{div}\,(f_{\ell^{n+1},D_2})=\textrm{div}\,(f_{\ell^n,D_2}^{\ell})$,
we have $T_{\ell^{n+1}}^{\ell}(\bar{D}_1,\bar{D}_2)=T_{\ell^n}^{(\F_{q^{\ell}})}(\bar{D}_1,D_2)$,
where $T_{\ell^n}^{(\F_{q^{\ell}})}$ is the $\ell^n$-Tate pairing defined over $\F_{q^{\ell}}$.
We only need to show that
$$T_{\ell^n}^{(\F_{q^{\ell}})}(\bar{D}_1,D_2)=T_{\ell^n}(D_1,D_2)$$
Note that we have $\pi(\bar{D}_1)=\bar{D}_1+D_{\ell}$, where $D_{\ell}$ is a point of order $\ell$.
This implies that
$$\bar{D}_1+\pi(\bar{D}_1)+\pi^2(\bar{D}_1)+\ldots+\pi^{\ell-1}(\bar{D}_1)\sim \ell \bar{D}_1\sim D_1.$$
Hence we get
\begin{eqnarray*}
T_{\ell^n}^{(\F_{q^{\ell}})}(\bar{D}_1,D_2)&=&f_{\ell^n,D_2}(\bar{D}_1)^{\frac{(1+q+\ldots+q^{\ell-1})(q-1)}{\ell^n}}\\
&&=f_{\ell^n,D_2}(\bar{D}_1+\pi(\bar{D}_1)+\ldots+\pi^{\ell-1}(\bar{D}_1))^{\frac{(q-1)}{\ell^n}}.
\end{eqnarray*}
By applying Weil's reciprocity law, we obtain
\begin{eqnarray*}
T_{\ell^n}^{(\F_{q^{\ell}})}(\bar{D}_1,D_2)=f_{\ell^{n},D_2}(D_1)^{\frac{(q-1)}{\ell^n}}f(D_2)^{q-1},
\end{eqnarray*}
where $f$ is such that $\textrm{div}(f)=(\bar{D}_1)+(\pi(\bar{D}_1))+\ldots+(\pi(\bar{D}_1))-D_1$ and that $\textrm{supp}(f)\cap \textrm{supp}(D_2)=\emptyset$. Note that $f$ is $\F_q$-rational, so $f(D_2)^{q-1}=1$. This concludes the proof.
\end{proof}

\end{document}